\let\cal\mathcal
\let\hat\widehat
\let\tilde\widetilde
\let\phi\varphi
\let\epsilon\varepsilon
\def\Q{{\bf Q}} 
\def\Z{{\bf Z}}
\def\C{{\bf C}}
\def\N{{\bf N}}
\def\A{{\bf A}}
\def\E{{\bf E}}
\def\B{{\bf B}}
\def\OO{{\cal O}}
\def\G{{\cal G}}
\def\F{{\bf F}}
\def\Ker{{\mathrm{ker}}}
\def\End{{\mathrm{End}}}
\def\D{{\bf D}}
\def\id{{\mathrm{id}}}
\def\Mat{{\mathrm{Mat}}}
\def\Lie{{\mathrm{Lie}}}
\def\pa{{\mathrm{pa}}}
\def\an{{\mathrm{an}}}
\def\LT{{\mathrm{LT}}}
\def\dR{{\mathrm{dR}}}
\def\rig{{\mathrm{rig}}}
\def\ra{{\longrightarrow}}
\newcommand{\fpa}{\text{$F\!\mbox{-}\mathrm{pa}$}}
\newcommand{\dan}{\text{$\mbox{-}\mathrm{an}$}}
\newcommand{\fan}{\text{$F\!\mbox{-}\mathrm{la}$}}
\newcommand{\fla}{\text{$F\!\mbox{-}\mathrm{la}$}}
\def\Zp{{\Z_p}}
\def\Qp{{\Q_p}}
\def\Cp{{\C_p}}
\def\Qpbar{{\overline{\Q}_p}}
\def\At{{\tilde{\bf{A}}}}
\def\Atplus{{\tilde{\bf{A}}^+}}
\def\Bt{{\tilde{\bf{B}}}}
\def\Btplus{{\tilde{\bf{B}}^+}}
\def\Et{{\tilde{\bf{E}}}}
\def\Bdr{{\bf{B}_{\mathrm{dR}}}}
\def\Bdrplus{{\bf{B}_{\mathrm{dR}}^+}}
\def\Btrig{{\Bt_{\mathrm{rig}}^{\dagger}}}
\def\Btrigplus{{\Bt_{\mathrm{rig}}^+}}
\def\Gal{{\mathrm{Gal}}}
\def\Hom{{\mathrm{Hom}}}
\def\GL{{\mathrm{GL}}}
\def\LT{{\mathrm{LT}}}
\def\la{{\mathrm{la}}}
\def\cbf{{\bf{c}}}
\def\kbf{{\bf{k}}}
\def\ubar{{\overline{u}}}
\author{Léo Poyeton}
\address{IMB\\
Université de Bordeaux}
\email{leo.poyeton@u-bordeaux.fr}
\urladdr{perso.ens-lyon.fr/leo.poyeton/}
\date{\today}
\title{$F$-analytic $B$-pairs}
\begin{document}

\begin{abstract}
In this note, we define the notion of $F$-analytic $B$-pairs and we prove that its category is equivalent to the one of $F$-analytic $(\phi_q,\Gamma_K)$-modules.
\end{abstract}

\maketitle

\tableofcontents

\setlength{\baselineskip}{18pt}

\section*{Introduction}\label{intro} 

Let $p$ be a prime and let $K$ be a finite extension of $\Qp$. One of the main tools to study $p$-adic representations of $\G_K = \Gal(\Qpbar/K)$ is to operate a ``dévissage'' of the extension $\Qpbar/K$ through an intermediate extension $K_\infty/K$ which contains most of the ramification of $\Qpbar/K$ but such that $K_\infty/K$ is nice enough (for example when $K_\infty/K$ is an infinitely ramified $p$-adic Lie extension). 

In some sense, the simplest extension one can choose for $K_\infty/K$ is the cyclotomic extension of $K$. Using the theory of fields of norms \cite{Win83} attached to the cyclotomic extension of $K$, Fontaine has constructed \cite{Fon90} a theory of cyclotomic $(\phi,\Gamma_K)$-modules, which are finite dimensional vector spaces, defined on a local field $\B_K$ which is of dimension 2, and endowed with semilinear actions of a Frobenius $\phi$ and of $\Gamma_K = \Gal(K(\mu_{p^\infty})/K)$ that commute one to another. Moreover, Fontaine has constructed a functor $V \mapsto D(V)$ which is an equivalence of categories between $p$-adic representations of $\G_K$ and étale $(\phi,\Gamma_K)$-modules (which means that $\phi$ is of slope $0$). The main theorem of \cite{cherbonnier1998representations} show that these $(\phi,\Gamma_K)$-modules are overconvergent and it allows us to relate the cyclotomic $(\phi,\Gamma_K)$-modules with classical $p$-adic Hodge theory, using the fact that the resulting overconvergent $(\phi,\Gamma_K)$-modules give rise to what we still call $(\phi,\Gamma_K)$-modules but defined on the cyclotomic Robba ring $\B_{\rig,K}^\dagger$.

The construction of the $p$-adic Langlands correspondence for $\GL_2(\Qp)$ \cite{colmez2010representations} relies heavily on this construction, and in particular on the computations made by Colmez in the trianguline case \cite{colmez2008representations}. A trianguline representation of $\G_{\Qp}$ of dimension $2$ is a $2$-dimensional $p$-adic representation of $\G_{\Qp}$ such that its $(\phi,\Gamma)$-module is an extension of two rank $1$ $(\phi,\Gamma)$-modules. Note that this does not imply that the representation itself is an extension of two rank $1$ representations because the $(\phi,\Gamma)$-modules appearing in this ``triangulation'' do not need to be étale. 

In order to extend this correspondence to $\GL_2(F)$, it seems necessary to replace the theory of cyclotomic $(\phi,\Gamma_K)$-modules by Lubin-Tate $(\phi_q,\Gamma_K)$-modules, where $F \subset K$ and $K_\infty/K$ is generated by the torsion points of a Lubin-Tate group attached to a uniformizer of $F$. Specializing Fontaine's constructions, Kisin and Ren have shown that we can attach to each representation of $\G_K$ a Lubin-Tate $(\phi_q,\Gamma_K)$-module $D(V)$ over a $2$-dimensional local field $\B_K$ (which is not the same as in the cyclotomic case) and such that $V \mapsto D(V)$ gives rise to an equivalence of categories when the image is restricted to the subcategory of étale objects.

However, unlike in the cyclotomic case, the resulting Lubin-Tate $(\phi_q,\Gamma_K)$-modules are usually not overconvergent. There still exists a subcategory of the one of Lubin-Tate $(\phi_q,\Gamma_K)$-modules in which the objets are overconvergent, which is the subcategory of $F$-analytic $(\phi_q,\Gamma_K)$-modules. The fact that those are overconvergent is the main theorem of \cite{Ber14MultiLa}. The generalization of trianguline representations in the $\Qp$-cyclotomic case to $F$-analytic representations has been studied in \cite{FX13} (and Kisin and Ren mainly studied $F$-analytic crystalline representations in \cite{KR09}). 

A generalization of trianguline representations in the cyclotomic case for $\G_K$ has been done by Nakamura in \cite{Nakapieds} using the language of Berger's $B$-pairs \cite{BerBpaires} (and their natural extension to $E$-representations which are called $E-B-$pairs in \cite{Nakapieds}) but as noted in the introduction of \cite{FX13}, this language does not appear well suited to deal with Lubin-Tate objects. 

In \cite[Rem. 10.3]{Ber14MultiLa} Berger notes that his results and methods should extend to prove that there is an equivalence of categories between $F$-analytic $(\phi_q,\Gamma_K)$-modules and $F$-analytic $B$-pairs, and it is this result this note is meant to prove.

In the cyclotomic case, it is often useful to switch between cyclotomic $(\phi,\Gamma_K)$-modules and $B$-pairs, some properties being easier to prove using one of the categories instead of the other, which is why the following theorem might prove useful:

\begin{enonce*}{Theorem A}
There is an equivalence of categories between $F$-analytic $B$-pairs and $F$-analytic $(\phi_q,\Gamma_K)$-modules.
\end{enonce*}

In particular, a recent preprint from Porat shows that for $F$-analytic $2$-dimensional representations of $\G_F$, $V$ is trianguline in the cyclotomic sense if and only if it is trianguline in the sense of \cite{FX13}. His theorem should extend for $F$-analytic representations of arbitrary dimension and it should follow as a straightforward consequence of our theorem A.  

As stated above, the usual language of $B$-pairs is not well suited to deal with Lubin-Tate objects and therefore it is quite uneasy to identify the corresponding objects through the equivalence of categories of Theorem A. Ding has constructed in \cite{Dingpartially} a variant of Berger's $B$-pairs with a Lubin-Tate flavour, which are particularly well suited to deal with the Lubin-Tate case. For any embedding $\sigma : F \to \Qpbar$, and for any $B$-pair $D$, Ding constructs what he calls a $B_\sigma$-pair $D_\sigma$, such that $D \mapsto D_\sigma$ is an equivalence of categories between $B$-pairs and $B_\sigma$-pairs. In the $F$-analytic case, we construct a functor $D \mapsto W(D)$ from the category of $F$-analytic $(\phi_q,\Gamma_K)$-modules to the category of $F$-analytic $B_{\id}$-pairs and which is the natural Lubin-Tate analogue of the constructions of Berger \cite{BerBpaires}. In particular, the following ensues from theorem A but the correspondence between objects is easier to see:

\begin{enonce*}{Theorem B}
The functor $D \mapsto W(D)$, from the category of $F$-analytic $(\phi_q,\Gamma_K)$-modules to the category of $F$-analytic $B_{\id}$-pairs is an equivalence of categories.
\end{enonce*}

\section*{Structure of the note}
The first three sections of this note are meant to recall the setting, notations and few properties of Lubin-Tate extensions, $(\phi_q,\Gamma_K)$-modules and locally analytic vectors from \cite{Ber14MultiLa} that are needed for the rest of this note. In particular, these are pretty much the same as \cite[\S 1, 2 and 3]{Ber14MultiLa}. Section 4 explains the notion of $F$-analyticity in the case of $F$-representations and $(\phi_q,\Gamma_K)$-modules. In section 5, we recall the notion of $(B,E)$-pairs, define $F$-analyticity for $(B,E)$-pairs and prove the main theorem of this note. In section 6 we explain how to replace the category of $F$-analytic $B$-pairs by more natural objects, following constructions from Yiwen Ding. 

\section*{Acknowledgements}
The first version of this note (mainly the first five sections of the current note) was written between December 2019 and January 2020. The author would like to thank Laurent Berger and Yiwen Ding for some useful discussions resulting in the first version of this note. The author would also like to thank Gal Porat for discussing the results from this note and his remarks and questions which resulted in this new version. 

\section{Lubin-Tate extensions}
\label{section LT}
Let $F$ be a finite extension of $\Qp$, let $\OO_F, \pi$ and $k_F$ denote respectively its ring of integers, a uniformizer of $\OO_F$ and its residue field. Let $h \geq 1$ be such that $|k_F|=q=p^h$. We let $F_0= W(k_F)[1/p]$, the maximal unramified extension of $\Qp$ inside $F$, and we let $e$ to be the ramification index of $F$. Let $\Sigma$ be the set of embeddings of $F$ in $\Qpbar$, and let $\sigma$ be the absolute Frobenius on $F_0$. For $\tau \in \Sigma$, there exists a unique $n(\tau) \in \{0,\ldots,h-1\}$ such that $\overline{\tau} = \overline{\sigma}^{n(\tau)}$ on $k_F$. We also let $E$ to be a field of coefficients which is a finite Galois extension of $\Qp$ containing $F$ (hence $F^\Gal$), and write $\Sigma_E$ for $\Gal(E/\Qp)$. We also let $\Sigma_0 = \Sigma \setminus \{\id\}$.

Let $S$ be a formal $\OO_F$-module Lubin-Tate group law attached to $\pi$, such that the endomorphism of multiplication by $\pi$ is given by the power series $[\pi](T) = T^q+\pi T$. For $a \in \OO_F$, we will denote $[a](T)$ the power series giving the endomorphism of multiplication by $a$ for $S$. Let $F_n$ be the field generated by $F$ and the points of $\pi^n$-torsion, that is the roots of $[\pi^n](T)$. Let $F_\infty = \bigcup_{n\geq 1}F_n$, $\Gamma_F = \Gal(F_\infty/F)$ and $H_F = \Gal(F_\infty/F)$. Let $\chi_\pi$ be the attached Lubin-Tate character, so that the map $g \mapsto \chi_\pi(g)$ induces an isomorphism between $\Gamma_F$ and $\OO_F^\times$. Note that there exists an unramified character $\eta : \G_F \to \Z_p^\times$ such that $N_{F/\Qp}(\chi_\pi)=\eta\chi_{\mathrm{cycl}}$, where $\chi_{\mathrm{cycl}}$ is the cyclotomic character. 

If $K$ is a finite extension of $F$, we write $K_n = KF_n$ and $K_\infty = KF_\infty$. We let $\Gamma_K = \Gal(K_\infty/K)$ and $H_K = \Gal(K_\infty/K)$. We let $K_\infty^\eta=\Qpbar^{\Ker \eta\chi_{\mathrm{cycl}}}$, so that $K_\infty^\eta \subset K_\infty$ and that $\eta\chi_{\mathrm{cycl}}$ identifies $\Gal(K_\infty^\eta/K)$ with an open subgroup of $\Z_p^\times$.

Now let $\Gamma_n = \Gal(K_\infty/K_n)$ so that $\Gamma_n = \{ g \in \Gamma_K$ such that $\chi_\pi(g) \in 1+\pi^n \OO_F\}$. Let $u_0 = 0$ and for each $n \geq 1$, chose $u_n \in \Qpbar$ such that $[\pi](u_n)=u_{n-1}$, with $u_1 \neq 0$. We have $v_p(u_n) = 1/q^{n-1}(q-1)e$ for $n \geq 1$ and $F_n=F(u_n)$. We also let $Q_n(T)$ be the minimal polynomial of $u_n$ over $F$, so that $Q_0(T)=T$, $Q_1(T)=[\pi](T)/T$ and $Q_{n+1}(T)=Q_n([\pi](T))$ if $n \geq 1$. Let $\log_{\LT}(T) = T + O(\deg \geq 2) \in F[\![T]\!]$ denote the Lubin-Tate logarithm map, which converges on the  open unit disk and satisfies $\log_{\LT}([a](T)) = a  \cdot \log_{\LT}(T)$ if $a\in \OO_F$.  Note that we have $\log_{\LT}(T) = T \cdot \prod_{k \geq 1} Q_k(T)/\pi$. We also let $\exp_{\LT}(T)$ denote the inverse of $\log_{\LT}(T)$.

Let $\OO_{\Cp}^{\flat} = \{ (x_0,x_1,\hdots)$, with $x_n \in \OO_{\Cp}/\pi$ and such that $x_{n+1}^q=x_n$ for all $n \geq 0\}$. This ring is endowed with the valuation $v_{\E}(\cdot)$ defined by $v_{\E}(x) = \lim_{n \to +\infty} q^n v_p(\hat{x}_n)$ where $\hat{x}_n \in \OO_{\Cp}$ lifts $x_n$. The ring $\OO_{\Cp}^{\flat}$ is complete for $v_{\E}(\cdot)$. If the $\{u_n\}_{n \geq 0}$ are as above, then $\ubar = (\ubar_0,\ubar_1,\hdots) \in \OO_{\Cp}^{\flat}$ and $v_{\E}(\ubar) = q/(q-1)e$. Let $\Cp^{\flat}$ be the fraction field of $\OO_{\Cp}^{\flat}$.

Let $W_F(\cdot)=\OO_F \otimes_{\OO_{F_0}} W(\cdot)$ be the $F$-Witt vectors. Let $\Atplus=\OO_F \otimes_{\OO_{F_0}}W(\OO_{\Cp}^{\flat})$, $\At=\OO_F \otimes_{\OO_{F_0}}W(\Cp^{\flat})$ and let $\Btplus=\Atplus[1/\pi]$ and $\Bt=\At[1/\pi]$. These rings are preserved by the Frobenius map $\phi_q=\id \otimes \phi^h$. By \cite[\S 9.2]{Col02}, there exists $u \in \Atplus$, whose image in $\OO_{\Cp}^{\flat}$ is $\ubar$, and such that $\phi_q(u) = [\pi](u)$ and $g(u) = [\chi_\pi(g)](u)$ if $g \in \Gamma_F$.

Every element of $\Btplus[1/[\ubar]]$ can be written uniquely as a sum $\sum_{k \gg -\infty} \pi^k [x_k]$ where $\{x_k\}_{k \in \Z}$ is a bounded sequence of $\Et$. For $r \geq 0$, we define a valuation $V(\cdot,r)$ on $\Btplus[1/[\ubar]]$ by 
\[ V(x,r) = \inf_{k \in \Z} \left(\frac{k}{e} + \frac{p-1}{pr} v_{\E}(x_k)\right)  \text{ if } x = \sum_{k \gg -\infty} \pi^k [x_k]. \]

If $I$ is a closed subinterval of $[0;+\infty[$, then let $V(x,I) = \inf_{r \in I} V(x,r)$. We define $\Bt^I$ to be the completion of $\Btplus[1/[\ubar]]$ for the valuation $V(\cdot,I)$ if $0 \notin I$. If $I=[0;r]$, then let $\Bt^I$ be the completion of $\Btplus$ for $V(\cdot,I)$.

For $\rho>0$, let $\rho'=\rho \cdot e \cdot p/(p-1) \cdot (q-1)/q$ as in \cite[\S 3]{Ber14MultiLa}. We have $V(u^i,r) = i/r'$ for $i \in \Z$ if $r>1$ (see \cite[\S 3]{Ber14MultiLa}).

Let $I$ be either a subinterval of $]1;+\infty[$ or such that $0 \in I$, and let $f(Y) = \sum_{k \in \Z} a_k Y^k$ be a power series with $a_k \in F$ and such that $v_p(a_k)+k/\rho' \to +\infty$ when $|k| \to +\infty$ for all $\rho \in I$. The series $f(u)$ converges in $\Bt^I$ and we let $\B_F^I$ denote the set of $f(u)$ where $f(Y)$ is as above. It is a subring of $\Bt_F^I = (\Bt^I)^{H_F}$, which is stable under the action of $\Gamma_F$. The Frobenius map gives rise to a map $\phi_q : \B_F^I \to \B_F^{qI}$. If $m \geq 0$, then we have $\phi_q^{-m}(\B_F^{q^m I}) \subset \Bt_F^I$ and we let $\B_{F,m}^I = \phi_q^{-m}(\B_F^{q^m I})$. 

We will write $\B_{\mathrm{rig},F}^{\dagger,r}$ for $\B_F^{[r;+\infty[}$. Let $\B_F^{\dagger,r}$ denote the set of $f(u) \in \B_{\mathrm{rig},F}^{\dagger,r}$ such that the sequence $\{a_k\}_{k \in \Z}$ is bounded. Let $\B_F^{\dagger} = \cup_{r \gg 0} \B_F^{\dagger,r}$. Its residue field $\E_F$ is isomorphic to $\F_q(\!(\overline{u})\!)$. If $K$ is a finite extension of $F$ then by the theory of the field of norms (see \cite{Win83}), there corresponds to $K/F$ a separable extension $\E_K / \E_F$, of degree $[K_\infty:F_\infty]$. Since $\B_F^{\dagger}$ is a Henselian field, there exists a finite unramified extension $\B_K^{\dagger}/ \B_F^{\dagger}$ of degree $f = [K_\infty:F_\infty]$ whose residue field is $\E_K$ (see \S 2 and \S 3 of \cite{matsuda1995local}). There exist therefore $r(K) >0$ and elements $x_1,\hdots,x_f$ in $\B_K^{\dagger,r(K)}$ such that $\B_K^{\dagger,s} = \oplus_{i=1}^f \B_F^{\dagger,s} \cdot x_i$ for all $s \geq r(K)$. Note that the rings $\B_K^{\dagger}$ are actually contained inside $\Bt$. We also let $\B_K$ to be the $p$-adic completion of $\B_K^{\dagger}$ inside $\Bt$, and $\A_K$ its ring of integers for the $p$-adic topology (note that we could have defined $\A_F$ as the $p$-adic completion of $\OO_F[\![u]\!][1/u]$ inside $\At$, put $\B_F = \A_F[1/\pi]$ and used the same argument as in the beginning of \cite[\S 6.1]{colmez2008espaces} to define $\B_K$). Let $\B$ be the $p$-adic completion of $\bigcup_{K/F}\B_K$ inside $\Bt$.

Let $\B_{\mathrm{rig},K}^{\dagger,r}$ denote the Fréchet completion of $\B_{K}^{\dagger,r}$ for the valuations $\{V(\cdot,[r;s]) \}_{s \geq r}$. Let $\B_{\mathrm{rig},K,m}^{\dagger,r} = \phi_q^{-m}(\B_{\mathrm{rig},K}^{\dagger,q^mr})$ and $\B_{\mathrm{rig},K,\infty}^{\dagger,r} = \cup_{m \geq 0} \B_{\mathrm{rig},K,m}^{\dagger,r}$. Let $\Bt_{\mathrm{rig}}^{\dagger,r}$ denote the Fréchet completion of $\Btplus[1/[\ubar]]$ for the valuations $\{V(\cdot,[r;s]) \}_{s \geq r}$. Let $\Bt_{\rig}^\dagger = \cup_{r \gg 0} \Bt_{\mathrm{rig}}^{\dagger,r}$, $\Bt_{\mathrm{rig},K}^{\dagger,r} = (\Bt_{\mathrm{rig}}^{\dagger,r})^{H_K}$ and $\Bt_{\mathrm{rig},K}^\dagger = (\Btrig)^{H_K}$.

Recall that $K_\infty^\eta/K$ is the extension of $K$ attached to $\eta\chi_{\mathrm{cycl}}$. Let $\Gamma_K' = \Gal(K_\infty^\eta/K)$. Let $\B_{K,\eta}^\dagger$, $\B_{K,\eta}^I$ and $\B_{\mathrm{rig},K,\eta}^\dagger$ be as in \cite[\S 8]{Ber14MultiLa}. By the same arguments as in \cite[\S 8]{Ber14MultiLa}, there is an equivalence of categories between étale $(\phi,\Gamma_K')$-modules over $E \otimes_{\Qp}\B_{\mathrm{rig},K,\eta}^\dagger$ (it is also true over $E \otimes_{\Qp}\B_{K,\eta}^\dagger$) and $E$-representations of $\G_K$. We will also denote by $\Bt_{\mathrm{rig},\eta}^\dagger$ the ring $\Bt_{\mathrm{rig}}^\dagger$ in the specific case of $F=\Qp$, so that $\Bt_{\mathrm{rig}}^\dagger = F \otimes_{F_0} \Bt_{\mathrm{rig},\eta}^\dagger$. Note that the ring $\Bt_{\mathrm{rig},\eta}^\dagger$ does actually not depend on $\eta$ but we use this notation for convenience.

A $(\phi_q,\Gamma_K)$-module over $\B_K$ is a $\B_K$-vector space $\D$ of finite dimension $d$, along with a semilinear Frobenius map $\phi_q$ and a commuting continuous and semilinear action of $\Gamma_K$. We say that $\D$ is étale if there exists a basis of $\D$ in which $\Mat(\phi)$ belongs to $\GL_d(\A_K)$. By specializing the constructions of \cite{Fon90}, Kisin and Ren prove the following theorem \cite[Thm. 1.6]{KR09}.

\begin{theo}\label{fontkisren}
The functors $V \mapsto (\B \otimes_F V)^{H_K}$ and $\D \mapsto (\B \otimes_{\B_K} \D)^{\phi_q=1}$ give rise to mutually inverse equivalences of categories between the category of $F$-linear representations of $\G_K$ and the category of étale $(\phi_q,\Gamma_K)$-modules over $\B_K$.
\end{theo}

We say that a $(\phi_q,\Gamma_K)$-module $\D$ is overconvergent if there exists a basis of $\D$ in which the matrices of $\phi_q$ and of all $g \in \Gamma_K$ have entries in $\B_K^{\dagger}$. This basis generates a $\B_K^{\dagger}$-vector space $\D^{\dagger}$ which is canonically attached to $\D$. Theorem \ref{fontkisren} extends more generally to an equivalence of categories between the category of $E$-linear representations of $\G_K$ and the category of étale $(\phi_q,\Gamma_K)$-modules over $E \otimes_F \B_K$.

The main result of \cite{cherbonnier1998representations} states that if $F=\Qp$,  then every étale $(\phi_q,\Gamma_K)$-module over $\B_K$ is overconvergent but if $F \neq \Qp$, then this is no longer the case (see \cite{FX13} for example).


\begin{theo}
\label{struclocana}
Let $I=[r_\ell;r_k]$ with $\ell \leq k$, let $K$ be a finite extension of $F$, and let $m \geq 0$ be such that $t_\pi$ and $t_\pi/Q_k$ belong to $(\Bt^I_F)^{\Gamma_{m+k}\dan,\fla}$.
\begin{enumerate}
\item $(\Bt^I_F)^{\Gamma_{m+k}\dan,\fla} \subset \B^I_{F,m}$;
\item $(\Bt^I_K)^{\fla} = \B^I_{K,\infty}$;
\item $(\Bt_{\mathrm{rig},K}^{\dagger,r_\ell})^{\fpa} = \B_{\mathrm{rig},K,\infty}^{\dagger,r_\ell}$.
\end{enumerate}
\end{theo}
\begin{proof}
This is \cite[Thm. 4.4]{Ber14MultiLa}.
\end{proof}

\section{Locally, pro-analytic and $F$-analytic vectors}
In this section, we recall the theory of locally analytic vectors of Schneider and Teitelbaum \cite{schneider2002bis} but here we follow the constructions of Emerton \cite{emerton2004locally} as in \cite{Ber14MultiLa}. We also define the notion of $F$-analytic vectors relative to the Galois group of a Lubin-Tate extension, following the definitions of \cite{Ber14MultiLa}. We will use the following multi-index notations: if $\cbf = (c_1, \hdots,c_d)$ and $\kbf = (k_1,\hdots,k_d) \in \N^d$ (here $\N=\Z^{\geq 0}$), then we let $\cbf^\kbf = c_1^{k_1} \cdot \ldots \cdot c_d^{k_d}$.

Let $G$ be a $p$-adic Lie group, and let $W$ be a $\Qp$-Banach representation of $G$. Let $H$ be an open subgroup of $G$ such that there exists coordinates $c_1,\cdots,c_d : H \to \Zp$ giving rise to an analytic bijection $\cbf : H \to \Z_p^d$. We say that $w \in W$ is an $H$-analytic vector if there exists a sequence $\left\{w_{\kbf}\right\}_{\kbf \in \N^d}$ such that $w_{\kbf} \rightarrow 0$ in $W$ and such that $g(w) = \sum_{\kbf \in \N^d}\cbf(g)^{\kbf}w_{\kbf}$ for all $g \in H$. We let $W^{H\dan}$ be the space of $H$-analytic vectors. This space injects into $\cal{C}^{\an}(H,W)$, the space of all analytic functions $f : H \to W$.  Note that $\cal{C}^{\an}(H,W)$ is a Banach space equipped with its usual Banach norm, so that we can endow $W^{H\dan}$ with the induced norm, that we will denote by $||\cdot ||_H$. With this definition, we have $||w||_H = \sup_{\kbf \in \N^d}||w_{\kbf}||$ and $(W^{H\dan},||\cdot||_H)$ is a Banach space.

We say that a vector $w$ of $W$ is locally analytic if there exists an open subgroup $H$ as above such that $w \in W^{H\dan}$. Let $W^{\la}$ be the space of such vectors, so that $W^{\la} = \bigcup_{H}W^{H\dan}$, where $H$ runs through a sequence of open subgroups of $G$. The space $W^{\la}$ is naturally endowed with the inductive limit topology, so that it is an LB space. Note that in the Lubin-Tate setting, we have $W^{\la} = \bigcup_{n \in \N}W^{\Gamma_n-\an}$.

%
%

Let $W$ be a Fréchet space whose topology is defined by a sequence $\left\{p_i\right\}_{i \geq 1}$ of seminorms. Let $W_i$ be the Hausdorff completion of $W$ at $p_i$, so that $W = \varprojlim\limits_{i \geq 1}W_i$. The space $W^{\la}$ can be defined but as stated in \cite{Ber14MultiLa}, this space is too small in general for what we are interested in, and so we make the following definition, following \cite[Def. 2.3]{Ber14MultiLa}:

\begin{defi}
If $W = \varprojlim\limits_{i \geq 1}W_i$ is a Fréchet representation of $G$, then we say that a vector $w \in W$ is pro-analytic if its image $\pi_i(w)$ in $W_i$ is locally analytic for all $i$. We let $W^{\pa}$ denote the set of all pro-analytic vectors of $W$. 
\end{defi}

We extend the definition of $W^{\la}$ and $W^{\pa}$ for LB and LF spaces respectively. 

\begin{prop}
\label{lainla and painpa}
Let $G$ be a $p$-adic Lie group, let $B$ be a Banach $G$-ring and let $W$ be a free $B$-module of finite rank, equipped with a compatible $G$-action. If the $B$-module $W$ has a basis $w_1,\ldots,w_d$ in which $g \mapsto \Mat(g)$ is a globally analytic function $G \to \GL_d(B) \subset M_d(B)$, then
\begin{enumerate}
\item $W^{H\dan} = \bigoplus_{j=1}^dB^{H\dan}\cdot w_j$ if $H$ is a subgroup of $G$;
\item $W^{\la} = \bigoplus_{j=1}^dB^{\la}\cdot w_j$.
\end{enumerate}
Let $G$ be a $p$-adic Lie group, let $B$ be a Fréchet $G$-ring and let $W$ be a free $B$-module of finite rank, equipped with a compatible $G$-action. If the $B$-module $W$ has a basis $w_1,\ldots,w_d$ in which $g \mapsto \Mat(g)$ is a pro-analytic function $G \to \GL_d(B) \subset M_d(B)$, then
$$W^{\pa} = \bigoplus_{j=1}^dB^{\pa}\cdot w_j.$$
\end{prop}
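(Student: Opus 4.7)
The plan is to prove the Banach case (1) by establishing both inclusions, then obtain (2) by taking unions over open subgroups, and finally reduce the Fréchet case to (2) by passing to each Hausdorff completion. For the inclusion $\bigoplus_j B^{H\dan}\cdot w_j \subseteq W^{H\dan}$, I first note that the basis vectors $w_j$ are themselves $H$-analytic: writing $g(w_j) = \sum_i M_{ij}(g)\, w_i$, the global analyticity of $\Mat$ gives a convergent power series expansion on $H$ in any local coordinates. Combined with Lemma \ref{ringla}, which says that $W^{H\dan}$ is a module over $B^{H\dan}$, this yields the inclusion.

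The heart of the argument is the reverse inclusion. Given $w = \sum_j b_j w_j \in W^{H\dan}$, I expand the action as
$$g(w) = \sum_i \biggl(\sum_j M_{ij}(g)\, g(b_j)\biggr) w_i,$$
so the $H$-analyticity of $w$ means that each $c_i(g) := \sum_j M_{ij}(g)\, g(b_j)$ is an $H$-analytic function of $g$. Inverting the matrix gives $g(b_j) = \sum_i (M(g)^{-1})_{ji}\, c_i(g)$. The map $g \mapsto M(g)^{-1}$ is globally analytic: its entries are polynomial in the $M_{ij}(g)$ divided by the nonvanishing globally analytic function $\det M(g)$, and Lemma \ref{ringla}(2) ensures the inverse of such a function is globally analytic. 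Therefore $g \mapsto g(b_j)$ is $H$-analytic, i.e.\ $b_j \in B^{H\dan}$.

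Part (2) is then immediate: if $w \in W^{\la}$, then $w \in W^{H\dan}$ for some open $H$, and (1) decomposes it with coefficients in $B^{H\dan} \subseteq B^{\la}$; conversely, finitely many locally analytic coefficients can be placed in a common $B^{H\dan}$ by intersecting the relevant open subgroups. For the Fréchet case, applying (2) to each Hausdorff completion $W_i = \bigoplus_j B_i \cdot w_j$, where the pro-analyticity hypothesis precisely ensures that the image of $\Mat(g)$ in $\GL_d(B_i)$ is locally analytic, yields $W_i^{\la} = \bigoplus_j B_i^{\la} \cdot w_j$; unwinding the definition of pro-analyticity then shows $\sum_j b_j w_j \in W^{\pa}$ iff each $b_j \in B^{\pa}$. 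The main obstacle throughout is the matrix inversion step in the reverse inclusion, which relies crucially on $\Mat(g)$ taking values in $\GL_d(B)$ rather than merely $M_d(B)$, so that $\det M(g)$ is a unit and its inverse remains globally analytic.
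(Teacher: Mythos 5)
The paper's own ``proof'' of this proposition is purely a citation to \cite[Prop.\ 2.3]{Ber14SenLa} and \cite[Prop.\ 2.4]{Ber14MultiLa}, so there is no in-paper argument to compare against; your reconstruction is therefore supplying a genuine proof, and its overall architecture (expand $g(w)$ in the basis, read off analyticity of the coefficient functions $c_i$, invert $\Mat(g)$ to recover analyticity of $g \mapsto g(b_j)$, then reduce (2) and the Fréchet case to (1)) matches the strategy Berger uses.

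There is, however, a real soft spot in the reverse inclusion. You assert that $g \mapsto \Mat(g)^{-1}$ is \emph{globally} analytic on $H$, justifying the inversion of $\det \Mat(g)$ by Lemma~\ref{ringla}(2). That lemma is a statement about \emph{elements}: if $w \in W^\times \cap W^{\la}$ then $1/w \in W^{\la}$, i.e.\ inverting a locally analytic unit of a $G$-ring gives a locally analytic unit, and even then it only lands you in $W^{\la}$, not $W^{H\dan}$. It says nothing about inverting a $B^\times$-valued analytic \emph{function} $\det\Mat : H \to B^\times$, which is what you actually need; the natural geometric-series argument for $1/\det\Mat(g)$ only converges after shrinking $H$, so at best this yields local analyticity of $\Mat(\cdot)^{-1}$. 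This is harmless for part (2), for the ``finitely many coefficients fit in a common $H$'' step, and for the Fréchet reduction — all of which permit passing to a smaller open subgroup — but it leaves a gap in part (1) as stated for a fixed coordinatized subgroup $H$. To close it one should either give a direct argument that $\Mat(\cdot)^{-1}$ is $H$-analytic (for instance by exploiting the cocycle relation $\Mat(g)^{-1} = g(\Mat(g^{-1}))$ and the $H$-analyticity of the entries of $\Mat$), or simply defer to \cite[Prop.\ 2.3]{Ber14SenLa} for this point rather than to Lemma~\ref{ringla}.
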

\begin{proof}
The part for Banach rings is proven in \cite[Prop. 2.3]{Ber14SenLa} and the one for Fréchet rings is proven in \cite[Prop. 2.4]{Ber14MultiLa}.
\end{proof}

The map $\ell : g \mapsto \log_p \chi_\pi(g)$ gives an $F$-analytic isomorphism between $\Gamma_n$ and $\pi^n \OO_F$ for $n \gg 0$. If $W$ is an $F$-linear Banach representation of $\Gamma_K$ and $n \gg 0$, then we say, following \cite{Ber14MultiLa}, that an element $w \in W$ is $F$-analytic on $\Gamma_n$ if there exists a sequence $\{w_k\}_{k \geq 0}$ of elements of $W$ with $\pi^{nk} w_k \to 0$ such that $g(w) = \sum_{k \geq 0} \ell(g)^k w_k$ for all $g \in \Gamma_n$. Let $W^{\Gamma_n\dan,\fla}$ denote the space of such elements. Let $W^{\fla} = \bigcup_{n \geq 1} W^{\Gamma_n\dan,\fla}$. 

\begin{lemm}
\label{danfla}
We have $W^{\Gamma_n\dan,\fan} = W^{\Gamma_n\dan} \cap W^{\fan}$. 
\end{lemm}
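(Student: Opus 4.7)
The plan is standard for this kind of comparison between two analyticity conditions on overlapping domains: dispatch the easy inclusion by direct multinomial expansion, and handle the harder inclusion by matching coefficients of two different convergent power series that coexist on a small enough subgroup. The point is that the $\Gamma_n$-analyticity provides exactly the extra decay needed to push the $F$-analytic radius of convergence from the small group $\Gamma_m$ back up to $\Gamma_n$.

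Concretely, fix $n$ large enough that $\ell : \Gamma_n \to \pi^n \OO_F$ is an $F$-analytic isomorphism, and pick a $\Zp$-basis $e_1, \ldots, e_d$ of $\pi^n \OO_F$ with $e_1 = \pi^n$ (possible because $1$ extends to a $\Zp$-basis of $\OO_F$, hence $\pi^n$ extends to one of $\pi^n \OO_F$). Take coordinates $c_1, \ldots, c_d$ on $\Gamma_n$ defined by $\ell(g) = \sum_j c_j(g) e_j$. The inclusion $W^{\Gamma_n\dan,\fan} \subset W^{\Gamma_n\dan} \cap W^{\fan}$ is then immediate: expanding $\ell(g)^k = (\sum_j c_j(g) e_j)^k$ via the multinomial theorem converts the $F$-analytic expansion $g(w) = \sum_k \ell(g)^k w_k$ into a convergent $\Gamma_n$-analytic power series in $\cbf(g)$, and the inclusion into $W^{\fan}$ is tautological.

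For the reverse inclusion, take $w \in W^{\Gamma_n\dan} \cap W^{\fan}$ and choose $m \geq n$ with $w$ $F$-analytic on $\Gamma_m$; write $g(w) = \sum_\kbf \cbf(g)^\kbf w_\kbf$ on $\Gamma_n$ (with $w_\kbf \to 0$) and $g(w) = \sum_k \ell(g)^k w'_k$ on $\Gamma_m$ (with $\pi^{mk} w'_k \to 0$). Both expansions define convergent multivariate power series in $(c_1(g), \ldots, c_d(g))$ on the neighborhood $\Gamma_m$ of the identity in $\Gamma_n$, and they compute the same function. Substituting the multinomial expansion of $\ell(g)^k$ into the second expression and invoking uniqueness of coefficients of convergent multivariate power series yields the identity
$$w_\kbf = \frac{|\kbf|!}{\kbf!}\, e_1^{k_1}\cdots e_d^{k_d}\, w'_{|\kbf|}.$$
Specializing to $\kbf = (k,0,\ldots,0)$ gives $w'_k = \pi^{-nk}\, w_{(k,0,\ldots,0)}$. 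Since the hypothesis $w \in W^{\Gamma_n\dan}$ forces $w_{(k,0,\ldots,0)} \to 0$, we obtain $\pi^{nk} w'_k \to 0$. Hence the $F$-analytic series $\sum_k \ell(g)^k w'_k$ converges on all of $\Gamma_n$ and, by analytic continuation from $\Gamma_m$, computes $g(w)$ there, so $w \in W^{\Gamma_n\dan,\fan}$.

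The main obstacle is the coefficient-matching step: one must be careful that the two expansions genuinely coincide as formal power series in the $c_j(g)$ after restriction to $\Gamma_m$, which rests on uniqueness of coefficients of multivariate convergent power series on a nonempty polydisk. The rest is bookkeeping, and the careful choice $e_1 = \pi^n$ is what makes the $\Gamma_n$-decay transfer to $w'_k$ with the correct rate $\pi^{nk}$.
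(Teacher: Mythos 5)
Your argument is correct, and notably more explicit than what the paper offers: the paper does not prove this lemma at all, it simply cites \cite[Lemm.\ 2.5]{Ber14MultiLa}. So the most useful comparison is against Berger's own proof, which also proceeds by comparing the $\Gamma_n$-analytic expansion in coordinates with the one-variable $F$-analytic expansion on a smaller $\Gamma_m$ and transferring the decay of coefficients back up. Your version makes the mechanism transparent by choosing coordinates adapted to a $\Zp$-basis of $\pi^n\OO_F$ with $e_1=\pi^n$: the multinomial rearrangement $\ell(g)^k=\sum_{|\kbf|=k}\tfrac{k!}{\kbf!}\cbf(g)^\kbf \mathbf{e}^\kbf$ converges since each $e_j\in\pi^n\OO_F$, so the easy inclusion follows from $\|e_1^{k_1}\cdots e_d^{k_d} w_{|\kbf|}\|\leq\|\pi^{n|\kbf|}w_{|\kbf|}\|\to 0$; and for the reverse, matching coefficients at $\kbf=(k,0,\ldots,0)$ forces $w'_k=\pi^{-nk}w_{(k,0,\ldots,0)}$, giving exactly the required decay, after which non-archimedean convergence on $\Gamma_n$ and uniqueness of the analytic continuation from $\Gamma_m$ close the loop. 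The two small points worth being a bit more careful about, both of which you are implicitly using correctly, are that $\Gamma_n$-analyticity does not depend on the chosen chart (so you may indeed work in your $\cbf$), and that the uniqueness of coefficients of a $W$-valued convergent power series holds on any open sub-$\Zp$-module of $\Zp^d$, in particular on $\cbf(\Gamma_m)$.
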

\begin{proof}
See \cite[Lemm. 2.5]{Ber14MultiLa}.
\end{proof}

If $\tau \in \Sigma$, we let $\nabla_\tau$ denote the derivative in the direction $\tau$, which belongs to $E \otimes_{\Qp} \mathrm{Lie}(\Gamma_F)$. It can be defined as follows: the $E$-vector space $\Hom_{\Qp}(F,E)$ is generated by the elements of $\Sigma$. If $W$ is an $E$-linear Banach representation of $\Gamma_K$ and if $w \in W^{\la}$ and $g \in \Gamma_K$, then there exists elements $\{ \nabla_\tau \}_{\tau \in \Sigma}$ of $F^{\Gal} \otimes_{\Qp} \Lie(\Gamma_F)$ such that we can write 
$$ \log g (w) = \sum_{\tau \in \Sigma} \tau(\ell(g)) \cdot \nabla_\tau(w). $$

With the same notation, there exist $m \gg 0$ and elements $\{w_\kbf \}_{\kbf \in \N^{\Sigma}}$ such that if $g \in \Gamma_m$, then $g (w) = \sum_{\kbf \in \N^{\Sigma}} \ell(g)^\kbf w_\kbf$, where $\ell(g)^\kbf = \prod_{\tau \in \Sigma} \tau \circ \ell(g)^{k_\tau}$. We have $\nabla_\tau(w) = w_{\mathbf{1}_\tau}$ where $\mathbf{1}_\tau$ is the $\Sigma$-tuple whose entries are $0$ except the $\tau$-th one which is $1$. If $\kbf \in\N^{\Sigma}$, and if we set  $\nabla^\kbf(w) =  \prod_{\tau \in \Sigma} \nabla_{\tau}^{k_\tau} (w)$, then $w_\kbf = \nabla^\kbf(w)/\kbf!$. 

\begin{rema}
\label{fla=nablatau nul}
If $w \in W^{\la}$, then $w \in W^{\fla}$ if and only if $\nabla_\tau(w) = 0$ for all $\tau \in \Sigma \setminus \{ \id \}$.
\end{rema}

%

\section{$F$-analyticity}
We now explain what it means for a representation or a $(\phi_q,\Gamma_K)$-module to be $F$-analytic, and we give a few related properties.

We say, following \cite[\S 7]{Ber14MultiLa} that an $F$-linear representation $V$ of $\G_K$ is $F$-analytic if $\Cp \otimes_F^\tau V$ is the trivial $\Cp$-semilinear representation of $\G_K$ for all embeddings $\tau \neq \id \in \Sigma$.

The following lemma shows that the condition for an $E$-representation to be $F$-analytic depends only on the restriction of the elements of $\Sigma_E$ to $F$.

\begin{lemm}
\label{fanelin}
If $V$ is an $E$-representation of $\G_K$, then the following are equivalent:
\begin{enumerate}
\item $V$ seen as an $F$-representation is $F$-analytic;
\item $\Cp \otimes_E^g V$ is the trivial $\Cp$-semilinear representation of $\G_K$ for all $g \in \Gal(E/\Qp)$ such that $g{|_F} \neq \id$.
\end{enumerate}
\end{lemm}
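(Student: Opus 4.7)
The plan is to reduce the equivalence to a $\G_K$-equivariant decomposition of $\Cp \otimes_F^\tau V$ coming from the fact that $E/F$ is finite and separable. The key algebraic input is the classical isomorphism
\[ \Cp \otimes_{F,\tau} E \xrightarrow{\sim} \prod_{\substack{g \in \Gal(E/\Qp) \\ g|_F = \tau}} \Cp, \qquad x \otimes y \longmapsto (x \cdot g(y))_g, \]
which holds because $E/\Qp$ is Galois and contains $F^{\Gal}$, so each embedding $\tau : F \hookrightarrow \Qpbar$ extends to a (nonempty) coset of $\Gal(E/F)$ inside $\Gal(E/\Qp)$, and the embeddings of $E$ in $\Cp$ lying over $\tau$ are exactly the elements of that coset.

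Tensoring this identification over $E$ with $V$ yields
\[ \Cp \otimes_F^\tau V \;\cong\; \bigoplus_{\substack{g \in \Gal(E/\Qp) \\ g|_F = \tau}} \Cp \otimes_E^g V. \]
I would then check that this decomposition is $\G_K$-equivariant: on the left, $\G_K$ acts diagonally on $\Cp \otimes_F V$; on the right, $\G_K$ acts on each summand $\Cp \otimes_E^g V$ diagonally. The decomposition comes from the $E$-algebra structure, and the $\G_K$-action is $E$-linear on $V$ and fixes $E \subset \Cp$ pointwise, so $\G_K$ commutes with the idempotents cutting out the factors.

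From this decomposition the equivalence is formal: the direct sum on the right is a trivial $\Cp$-semilinear representation of $\G_K$ if and only if each summand is, so (1) (which demands triviality for every $\tau \neq \id$) is precisely the conjunction, over all $g \in \Gal(E/\Qp)$ with $g|_F \neq \id$, that $\Cp \otimes_E^g V$ is trivial, i.e.\ (2). There is no real obstacle here; the only care needed is the bookkeeping that as $\tau$ ranges over $\Sigma \setminus \{\id\}$ and $g$ over the set of elements of $\Gal(E/\Qp)$ mapping to $\tau$, one recovers exactly the $g$'s with $g|_F \neq \id$, which follows from the surjectivity of restriction $\Gal(E/\Qp) \to \Sigma$ (guaranteed by $F^{\Gal} \subset E$).
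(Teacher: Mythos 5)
Your decomposition
\[
\Cp \otimes_F^\tau V \;\cong\; \bigoplus_{\substack{g \in \Gal(E/\Qp) \\ g|_F = \tau}} \Cp \otimes_E^g V
\]
obtained by writing $\Cp \otimes_F^\tau V = (\Cp \otimes_{F,\tau} E)\otimes_E V$ and decomposing the étale $\Cp$-algebra $\Cp \otimes_{F,\tau} E$ along the embeddings of $E$ extending $\tau$, is exactly the right input, and the bookkeeping at the end (that $\tau$ ranging over $\Sigma\setminus\{\id\}$ sweeps out precisely the $g$ with $g|_F \neq \id$) is correct. This is the content of \cite[Lemm.\ 7.2]{Ber14MultiLa}, which the paper simply cites, so the route is the intended one.

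However, the justification you give for $\G_K$-equivariance is not correct as stated: you assert that ``$\G_K$ \ldots\ fixes $E \subset \Cp$ pointwise,'' but there is no assumption that $K \supset E$ (here $K$ is only required to be a finite extension of $F$, while $E$ is a finite Galois extension of $\Qp$ containing $F^{\Gal}$), so an element $h \in \G_K$ need not fix the coefficients of the idempotents $e_g \in E \otimes_{F,\tau} E \subset \Cp \otimes_{F,\tau}E$. In fact $h$ sends the $g$-idempotent to the $(h|_E)\circ g$-idempotent, i.e.\ it permutes the factors rather than fixing them. What saves the argument is that this permutation preserves the relevant index set: since $h$ fixes $F \subset K$, one has $(h\circ g)|_F = \id$ if and only if $g|_F = \id$, so the block $\bigoplus_{g|_F\neq\id}\Cp\otimes_E^g V$ (equivalently $\bigoplus_{\tau\neq\id}\Cp\otimes_F^\tau V$) is $\G_K$-stable even though the individual summands need not be, and the triviality conditions in (1) and (2) each amount to triviality of this block. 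Alternatively you may pass to $L = KE$, where your argument as written is valid, and then descend by noting that the conditions in (1) and (2) are insensitive to the $\G_K$-permutation of the factors. Either patch makes the proof complete; without one, the $\G_K$-equivariance step relies on a false premise.
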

\begin{proof}
See \cite[Lemm. 7.2]{Ber14MultiLa}.
\end{proof}

If $\D_{\mathrm{rig}}^{\dagger}$ is a $(\phi_q,\Gamma_K)$-module over $\B_{\mathrm{rig},K}^{\dagger}$, and if $g \in \Gamma_K$ is close enough to $1$, then the series $\log(g) = \log(1+(g-1))$ gives rise to a differential operator $\nabla_g : \D_{\mathrm{rig}}^{\dagger} \to \D_{\mathrm{rig}}^{\dagger}$. The map $\mathrm{Lie}\,\Gamma_K \to \End(\D_{\mathrm{rig}}^{\dagger})$ arising from $v \mapsto \nabla_{\exp(v)}$ is $\Qp$-linear, and we say, following \cite[\S 2.1]{KR09}, \cite[\S 1.3]{FX13} and \cite[\S 7]{Ber14MultiLa}, that $\D_{\mathrm{rig}}^{\dagger}$ is $F$-analytic if this map is $F$-linear. This is the same as asking the elements of $\D_{\mathrm{rig}}^{\dagger}$ to be pro-$F$-analytic vectors for the action of $\Gamma_K$.

Berger proved \cite[Thm. 10.1]{Ber14MultiLa} that the Lubin-Tate $(\phi_q,\Gamma_K)$-modules attached to $F$-analytic representations are overconvergent. He showed how to attach to an $E$-representation $V$ of $\G_K$ that is $F$-analytic an étale $(\phi_q,\Gamma_K)$-module $\D_{\mathrm{rig}}^\dagger(V)$ over $\B_{\mathrm{rig},K}^\dagger$ which is $F$-analytic and proved \cite[Thm. 10.4]{Ber14MultiLa} that the functor $V \mapsto \D_{\mathrm{rig}}^\dagger(V)$ gives rise to an equivalence of categories between the category of $F$-analytic $E$-representations of $G_K$ and the category of étale $F$-analytic Lubin-Tate $(\phi_q,\Gamma_K)$-modules over $E \otimes_F \B_{\mathrm{rig},K}^{\dagger}$.

We will give an equivalent condition for a Lubin-Tate $(\phi_q,\Gamma_K)$-module over $E \otimes_F \B_{\mathrm{rig},K}^{\dagger}$ to be $F$-analytic but first we recall a few of Berger's results about locally analytic periods inside some of the rings we have defined.

Given $\tau \in \Sigma$ and $f(Y) =\sum_{k \in \Z} a_k Y^k$ with $a_k \in F$, let $f^\tau(Y)=\sum_{k \in \Z} \tau(a_k) Y^k$. For $\tau \in \Sigma$, let $\tilde{n}(\tau)$ be the lift of $n(\tau) \in \Z/h\Z$ belonging to $\{0,\hdots,h-1\}$. Recall that $E$ is a finite extension of $F$ that contains $F^{\Gal}$ and that if $\tau \in \Sigma$, then we have $\nabla_\tau \in E \otimes_F \Lie(\Gamma_F)$. The field $E$ is a field of coefficients, so that $\G_K$ acts $E$-linearly below.

Let $t_\pi = \log_{\LT}(u) \in \B_{\mathrm{rig},K}^+$. Note that we actually have $t_\pi \in \B_{\mathrm{rig},F}^+$, and that $\phi_q(t_\pi) = \pi t_\pi$ and $g(t_\pi) = \chi_\pi(g) t_\pi$ if $g \in \G_F$. Let $y_\tau = (\tau \otimes \phi^{\tilde{n}(\tau)}) (u) \in \OO_E \otimes_{\OO_F} \Atplus$. We have $g(y_\tau) = [\chi_\pi(g)]^\tau(y_\tau)$ and $\phi_q(y_\tau) = [\pi]^\tau (y_\tau) = \tau (\pi) y_\tau+y_\tau^q$. Let $t_\tau = (\tau \otimes \phi^{\tilde{n}(\tau)})(t_\pi) = \log_{\LT}^\tau(y_\tau)$. 

We have $\nabla_\tau (y_\tau) = t_\tau \cdot v_\tau$ where $v_\tau = (\partial (T \oplus_{\LT} U)/ \partial U)^\tau(y_\tau,0)$ is a unit (see \S 2.1 of \cite{KR09}). Let $\partial_\tau = t_\tau^{-1} v_\tau^{-1} \nabla_\tau$ so that $\partial_\tau (y_\tau) = 1$. If $\tau,\upsilon \in \Sigma$, then $\partial_\tau \circ \partial_\upsilon = \partial_\upsilon \circ \partial_\tau$, and $\partial_\tau(y_\upsilon) = 0$ if $\tau \neq \upsilon$.

\begin{lemm}\label{partial stable}
We have $\partial_\tau((E \otimes_F \Bt_{\mathrm{rig},K}^\dagger)^{\pa}) \subset (E \otimes_F \Bt_{\mathrm{rig},K}^\dagger)^{\pa}$.
\end{lemm}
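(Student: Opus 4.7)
The plan is to decompose the operator as $\partial_\tau = t_\tau^{-1} \cdot v_\tau^{-1} \cdot \nabla_\tau$ and argue that each factor preserves pro-analyticity. The natural extension of Lemma~\ref{ringla} to the Fréchet/LF setting (pro-analytic vectors form a ring, and pro-analytic units have pro-analytic inverses) will reduce the lemma to two claims: (a) $\nabla_\tau$ maps $(E \otimes_F \Bt_{\mathrm{rig},K}^\dagger)^{\pa}$ into itself, and (b) $t_\tau^{-1}$ and $v_\tau^{-1}$ are pro-analytic in the (possibly localized) ring in which the formula for $\partial_\tau$ is interpreted.

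For (a), I would work one Banach level at a time. If $x$ is locally analytic in some $E \otimes_F \Bt_K^{[r;s]}$, with expansion $g(x) = \sum_\kbf \ell(g)^\kbf x_\kbf$ on a suitable $\Gamma_m$, then by the discussion in Section~2 we have $\nabla_\tau(x) = x_{\mathbf{1}_\tau}$; abelianness of $\Gamma_K$ (so that $g$ and $\nabla_\tau$ commute on locally analytic vectors) allows one to read off an analytic expansion for $g(\nabla_\tau(x))$ from that of $g(x)$, showing $\nabla_\tau(x)$ is itself locally analytic, with norm control $\|\nabla_\tau(x)\|_{\Gamma_m} \leq \|x\|_{\Gamma_m}$. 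Applying this at each Fréchet level and then passing to the inductive limit gives (a).

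For (b), I would first observe that $y_\tau$ is pro-analytic: this follows from $g(y_\tau) = [\chi_\pi(g)]^\tau(y_\tau)$ and the fact that $[a]^\tau(Y) \in \OO_E[\![Y]\!]$ depends analytically on $a \in \OO_F^\times$. Then both $t_\tau = \log_{\LT}^\tau(y_\tau)$ and $v_\tau = (\partial(T \oplus_{\LT} U)/\partial U)^\tau(y_\tau,0)$ are given by convergent power series in the pro-analytic element $y_\tau$, hence are themselves pro-analytic by the ring property. Since $v_\tau$ is a unit, Lemma~\ref{ringla}(ii) yields that $v_\tau^{-1}$ is pro-analytic.

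The main obstacle is that $t_\tau$ is \emph{not} a unit in $E \otimes_F \Bt_{\mathrm{rig},K}^\dagger$ (it vanishes along the Lubin-Tate torsion points), so ``$t_\tau^{-1}$'' has to be understood in a localization, and the actual content of the lemma is that for pro-analytic $w$, the element $\nabla_\tau(w)$ is automatically divisible by $t_\tau v_\tau$ in $E \otimes_F \Bt_{\mathrm{rig},K}^\dagger$ with pro-analytic quotient. I would handle this by first carrying out the argument above inside $E \otimes_F \Bt_{\mathrm{rig},K}^\dagger[1/t_\tau]$, where the formula $\partial_\tau = t_\tau^{-1} v_\tau^{-1} \nabla_\tau$ is valid, obtaining pro-analyticity of the image there, and then using the intrinsic characterisation of $\partial_\tau$ as the derivation with $\partial_\tau(y_\tau) = 1$ and $\partial_\tau(y_\upsilon) = 0$ for $\upsilon \neq \tau$, combined with a local power series expansion of pro-analytic vectors in the Lubin-Tate coordinates $\{y_\tau\}_{\tau \in \Sigma}$, to descend $\partial_\tau(w)$ back to $E \otimes_F \Bt_{\mathrm{rig},K}^\dagger$.
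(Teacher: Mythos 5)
The paper offers no argument for this lemma beyond the citation to \cite[Lemm.~5.2]{Ber14MultiLa}, so there is no internal proof to compare against; I will assess your sketch on its own terms.

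Your step (a) is essentially right: if $x$ is $\Gamma_m$-analytic with $g(x)=\sum_\kbf\ell(g)^\kbf x_\kbf$, then $\nabla_\tau(x)=x_{\mathbf{1}_\tau}$ and the coefficients of $\nabla_\tau(x)$ are $(k_\tau+1)x_{\kbf+\mathbf{1}_\tau}$, which still tend to $0$ with the right rate, so $\nabla_\tau(x)$ is again $\Gamma_m$-analytic. (Your bound $\|\nabla_\tau(x)\|_{\Gamma_m}\leq\|x\|_{\Gamma_m}$ is off by a factor $|\pi|^{-m}$ coming from the shift of index, but this is harmless.) The assertions that $y_\tau$, $t_\tau=\log_{\LT}^\tau(y_\tau)$ and $v_\tau$ are pro-analytic, and that $v_\tau^{-1}$ is, are correct and follow as you indicate from $g(y_\tau)=[\chi_\pi(g)]^\tau(y_\tau)$ and the LF-version of Lemma~\ref{ringla}.

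The genuine gap is in step (b), exactly where you flag the ``main obstacle.'' Once you localize at $t_\tau$ and then propose to ``descend back'' using ``a local power series expansion of pro-analytic vectors in the Lubin-Tate coordinates $\{y_\tau\}$,'' you are invoking, without formulation or proof, precisely the structural result that constitutes the heart of the lemma. The informal picture is clear: $\nabla_\tau$ kills the $F$-analytic subring $\B_{K,\infty}^I$, and $\nabla_\tau(y_\tau)=t_\tau v_\tau$, $\nabla_\tau(y_\upsilon)=0$, so on any element that is (convergently) a power series in the $y_\upsilon$'s with $F$-analytic coefficients, the Leibniz rule gives a factor of $t_\tau$ with controlled quotient. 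But to conclude you need to know that $(E\otimes_F\Bt_K^I)^{\la}$ is exactly such a completed ring, and that applying $\partial_\tau=d/dy_\tau$ is compatible with the completion (boundedness of the quotients, closedness of $t_\tau\cdot R$, etc.); this is the content of Berger's \S\S4--5 and is not automatic. Without it, localize-then-descend is circular: divisibility of $\nabla_\tau(w)$ by $t_\tau$ is what you must prove, not something you can recover after inverting $t_\tau$. Also note that pro-analyticity in the localization $E\otimes_F\Bt_{\mathrm{rig},K}^\dagger[1/t_\tau]$ needs its own definition since that ring is no longer Fr\'echet; it would be cleaner to bypass the localization entirely and go directly to the power series expansion once it is established.
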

\begin{proof}
See \cite[Lemm. 5.2]{Ber14MultiLa}.
\end{proof}

\begin{prop}
\label{prop eq phigamma fan}
Let $M$ be a $(\phi_q,\Gamma_K)$-module over $E\otimes_F(\Bt_{\mathrm{rig},K}^{\dagger})^{\pa}$. Let $$\mathrm{Sol}(M) = \left\{x \in M \textrm{ such that } \nabla_\tau(x) = 0 \textrm{ for all } \tau  \in \Sigma_0 \right\}.$$

If for all $\tau \in \Sigma_0$, $\nabla_\tau(M) \subset t_\tau \cdot M$, then there exists a unique $(\phi_q,\Gamma_K)$-module $\D_{\mathrm{rig}}^\dagger$ over $E\otimes_F\B_{\mathrm{rig},K}^{\dagger}$ such that $\mathrm{Sol}(M) = (E \otimes_F (\Bt_{\mathrm{rig},K}^\dagger)^{\fpa}) \otimes_{E \otimes_F \Bt_{\mathrm{rig},K}^\dagger} \D_{\mathrm{rig}}^\dagger$ and such that $M = (E \otimes_F (\Bt_{\mathrm{rig},K}^\dagger)^{\pa})\otimes_{E\otimes_F\B_{\mathrm{rig},K}^{\dagger}} \D_{\mathrm{rig}}^\dagger$, and $\D_{\mathrm{rig}}^\dagger$ is an $F$-analytic $(\phi_q,\Gamma_K)$-module.

Moreover, if $\D$ is a $(\phi_q,\Gamma_K)$-module over $E\otimes_F\B_{\mathrm{rig},K}^{\dagger}$, and if $M = (E \otimes_F \Bt_{\mathrm{rig},K}^\dagger)\otimes_{E\otimes_F\B_{\mathrm{rig},K}^{\dagger}} \D$, then $\D$ is $F$-analytic if and only if for all $\tau \in \Sigma_0$, $\nabla_\tau(M^{\pa}) \subset t_\tau \cdot M^{\pa}$. 
\end{prop}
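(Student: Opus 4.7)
The bridge between the two conditions is the factorization $\nabla_\tau = t_\tau v_\tau \partial_\tau$, in which $v_\tau$ is a unit. Thus the hypothesis $\nabla_\tau(M) \subset t_\tau \cdot M$ for all $\tau \in \Sigma_0$ is equivalent to asking that $M^{\pa}$ be stable under the pairwise commuting derivations $\{\partial_\tau\}_{\tau \in \Sigma_0}$. Combined with Remark \ref{fla=nablatau nul}, a pro-analytic element $x \in M$ satisfies $\nabla_\tau(x) = 0$ for all $\tau \in \Sigma_0$ if and only if $x$ is pro-$F$-analytic, so $\mathrm{Sol}(M) = M^{\fpa}$; by Theorem \ref{struclocana}(3), this is naturally a module over $(E \otimes_F \Bt_{\mathrm{rig},K}^\dagger)^{\fpa} = E \otimes_F \B_{\mathrm{rig},K,\infty}^\dagger$.

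For the existence, my plan is to descend $M$ through $\mathrm{Sol}(M)$ by the classical horizontal-sections trick. Given $x \in M^{\pa}$, set
$$
\widetilde{x} \;=\; \sum_{\mathbf{k} \in \N^{\Sigma_0}} \frac{(-\mathbf{y})^{\mathbf{k}}}{\mathbf{k}!} \, \partial^{\mathbf{k}}(x),
$$
where $\mathbf{y}^{\mathbf{k}} = \prod_{\tau \in \Sigma_0} y_\tau^{k_\tau}$ and $\partial^{\mathbf{k}} = \prod_{\tau \in \Sigma_0} \partial_\tau^{k_\tau}$. Using $\partial_\upsilon(y_\tau) = \delta_{\tau, \upsilon}$ and the commutation of the $\partial_\tau$, a formal manipulation yields $\partial_\tau(\widetilde{x}) = 0$ for each $\tau \in \Sigma_0$, so $\widetilde{x} \in \mathrm{Sol}(M)$; the reciprocal Taylor identity $x = \sum_{\mathbf{k}} (\mathbf{y}^{\mathbf{k}}/\mathbf{k}!) \partial^{\mathbf{k}}(\widetilde{x})$ provides the inverse. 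After justifying convergence, this gives an isomorphism
$$ (E \otimes_F \Bt_{\mathrm{rig},K}^\dagger)^{\pa} \otimes_{E \otimes_F \B_{\mathrm{rig},K,\infty}^\dagger} \mathrm{Sol}(M) \;\xrightarrow{\sim}\; M^{\pa}, $$
exhibiting $\mathrm{Sol}(M)$ as a $(\phi_q, \Gamma_K)$-module of rank $d$ over $E \otimes_F \B_{\mathrm{rig},K,\infty}^\dagger$. Since $\B_{\mathrm{rig},K,\infty}^\dagger = \bigcup_m \phi_q^{-m}(\B_{\mathrm{rig},K}^\dagger)$, applying $\phi_q^m$ to a basis of $\mathrm{Sol}(M)$ for $m$ sufficiently large produces the desired $\D_{\mathrm{rig}}^\dagger$ over $E \otimes_F \B_{\mathrm{rig},K}^\dagger$, uniquely determined by the stated compatibility conditions. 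Its $F$-analyticity is automatic, since its elements sit in $\mathrm{Sol}(M) = M^{\fpa}$ and are therefore pro-$F$-analytic.

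For the converse, suppose $\D$ is $F$-analytic and set $M = (E \otimes_F \Bt_{\mathrm{rig},K}^\dagger) \otimes_{E \otimes_F \B_{\mathrm{rig},K}^\dagger} \D$. By Proposition \ref{lainla and painpa}, $M$ admits a basis $(e_i)$ of pro-$F$-analytic vectors, so $\nabla_\tau(e_i) = 0$ for all $\tau \in \Sigma_0$. For $a \in (\Bt_{\mathrm{rig},K}^\dagger)^{\pa}$, the Leibniz rule together with Lemma \ref{partial stable} gives $\nabla_\tau(a \otimes e_i) = \nabla_\tau(a) \otimes e_i = t_\tau \cdot v_\tau \partial_\tau(a) \otimes e_i \in t_\tau \cdot M^{\pa}$. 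Writing an arbitrary element of $M^{\pa}$ as $\sum a_i \otimes e_i$ and applying $\nabla_\tau$ coordinate-wise yields the desired inclusion $\nabla_\tau(M^{\pa}) \subset t_\tau \cdot M^{\pa}$.

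The principal technical obstacle will be proving convergence of the Taylor series defining $\widetilde{x}$ in the Fréchet topology on $M$, and verifying that $x \mapsto \widetilde{x}$ carries a pro-analytic basis of $M$ to a basis of $\mathrm{Sol}(M)$ of the same rank $d$ over $E \otimes_F \B_{\mathrm{rig},K,\infty}^\dagger$. This requires precise estimates on $V(y_\tau, [r;s])$ (building on $V(u^i, r) = i/r'$ from \S 3) combined with the decay $\|\partial^{\mathbf{k}}(e_i)\| \to 0$ characteristic of pro-analytic vectors from \S 2, in the spirit of the arguments in \S 5 of \cite{Ber14MultiLa}.
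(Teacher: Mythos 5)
Your overall skeleton matches the paper's: identify $\mathrm{Sol}(M) = M^{\fpa}$, show $\mathrm{Sol}(M)$ is a free module of rank $d$ over $E \otimes_F (\Bt_{\mathrm{rig},K}^\dagger)^{\fpa} = E\otimes_F\B_{\mathrm{rig},K,\infty}^\dagger$ generating $M$, descend by $\phi_q^m$ to get $\D_{\mathrm{rig}}^\dagger$ over $E \otimes_F \B_{\mathrm{rig},K}^\dagger$, and prove the converse via Remark \ref{fla=nablatau nul} together with Lemma \ref{partial stable}. The converse direction you give is correct and is the paper's argument.

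However, there are two gaps. First, the central step — that $\mathrm{Sol}(M)$ is free of rank $d$ over $E \otimes_F (\Bt_{\mathrm{rig},K}^\dagger)^{\fpa}$ with $M = (E \otimes_F (\Bt_{\mathrm{rig},K}^\dagger)^{\pa}) \otimes \mathrm{Sol}(M)$ — is exactly Theorem 6.1 of \cite{Ber14MultiLa}, which the paper simply cites. Your multivariate Taylor series $\widetilde{x} = \sum_{\mathbf{k}}(-\mathbf{y})^{\mathbf{k}}/\mathbf{k}!\cdot\partial^{\mathbf{k}}(x)$ is precisely the heart of Berger's proof of that theorem, so you are not taking a different route but rather re-deriving a cited result; and you explicitly leave unresolved the convergence estimates (and the descent/freeness conclusion they feed into), which is where the real technical content lies. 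Citing \cite[Thm.\ 6.1]{Ber14MultiLa} avoids having to redo this work. Second, you assert the uniqueness of $\D_{\mathrm{rig}}^\dagger$ but do not prove it. The paper gives a concrete argument: if $X$ is the change-of-basis matrix between two candidates and $P_1, P_2$ are the matrices of $\phi_q$, then $X \in \GL_d(E \otimes_F \B_{\mathrm{rig},K,n}^\dagger)$ for some $n \gg 0$, and the relation $X = P_2^{-1}\phi_q(X)P_1$ forces $X \in \GL_d(E \otimes_F \B_{\mathrm{rig},K}^\dagger)$. This $\phi_q$-descent-of-the-base-change-matrix argument is essential and should not be waved away, especially since the "only if" branch of the final equivalence ($\nabla_\tau(M^{\pa}) \subset t_\tau M^{\pa} \implies \D$ $F$-analytic) relies on it: one applies the first part to obtain an $F$-analytic $\D_{\mathrm{rig}}^\dagger$ and then invokes uniqueness to identify it with $\D$.
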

\begin{proof}
We first prove the first part of the theorem. Let $M$ be a $(\phi_q,\Gamma_K)$-module over $E\otimes_F(\Bt_{\mathrm{rig},K}^{\dagger})^{\pa}$. Theorem 6.1 of \cite{Ber14MultiLa} shows that 
$$\mathrm{Sol}(M) = \left\{x \in M \textrm{ such that } \nabla_\tau(x) = 0 \textrm{ for all } \tau  \in \Sigma_0 \right\}$$
is a free $E \otimes_F (\Bt_{\mathrm{rig},K}^\dagger)^{\fpa}$-module of rank $d$ such that
$$(E \otimes_F \Bt_{\mathrm{rig},K}^\dagger) \otimes_{E \otimes_F (\Bt_{\mathrm{rig},K}^\dagger)^{\fpa})^{\fpa}} \mathrm{Sol}(M) = (E \otimes_F \Bt_{\mathrm{rig},K}^\dagger) \otimes_E \D.$$
By (3) of theorem \ref{struclocana}, we have $(\Bt_{\mathrm{rig},K}^\dagger)^{\fpa} = \B_{\mathrm{rig},K,\infty}^\dagger = \bigcup_{n \geq 0} \B_{\mathrm{rig},K,n}^\dagger$. Since $\Gamma_K$ is topologically of finite type, there exist $n \geq 0$, and a basis $s_1,\hdots,s_d$ of $\mathrm{Sol}(M)$ such that $\Mat(\phi_q) \in \GL_d(E \otimes_F \B_{\mathrm{rig},K,n}^\dagger)$ and $\Mat(g) \in \GL_d(E \otimes_F \B_{\mathrm{rig},K,n}^\dagger)$ for all $g \in \Gamma_K$. If $\D_{\mathrm{rig}}^\dagger = \oplus_{i=1}^d (E \otimes_F \B_{\mathrm{rig},K}^\dagger) \cdot \phi_q^n(s_i)$, then $\D_{\mathrm{rig}}^\dagger$ is a $(\phi_q,\Gamma_K)$-module over $E \otimes_F \B_{\mathrm{rig},K}^\dagger$ such that $\mathrm{Sol}(M) = (E \otimes_F (\Bt_{\mathrm{rig},K}^\dagger)^{\fpa}) \otimes_{E \otimes_F \B_{\mathrm{rig},K}^\dagger} \D_{\mathrm{rig}}^\dagger$. 

The module $\D_{\mathrm{rig}}^\dagger$ is uniquely determined by this condition: if there are two such modules and if $X$ denotes the change of basis matrix and $P_1$, $P_2$ denote the matrices of $\phi_q$, then $X \in \GL_d(E \otimes_F \B_{\mathrm{rig},K,n}^\dagger)$ for $n \gg 0$, and the equation $X=P_2^{-1}\phi(X)P_1$ implies that $X \in \GL_d(E \otimes_F \B_{\mathrm{rig},K}^\dagger)$. 

Since $\mathrm{Sol}(M)$ is a free $E \otimes_F (\Bt_{\mathrm{rig},K}^\dagger)^{\fpa}$-module, $\D_{\mathrm{rig}}^\dagger$ is also free of the same rank.

Now, let $\D$ be a $(\phi_q,\Gamma_K)$-module over $E\otimes_F\B_{\mathrm{rig},K}^{\dagger}$, such that $M = (E \otimes_F \Bt_{\mathrm{rig},K}^\dagger)^{\pa}\otimes_{E\otimes_F\B_{\mathrm{rig},K}^{\dagger}} \D$ is such that for all $\tau \in \Sigma_0$, $\nabla_\tau(M) \subset t_\tau \cdot M$. We then have $\D \subset \mathrm{Sol}(M)$ so that $\D$ is $F$-analytic by the above. If $\D$ is an $F$-analytic $(\phi_q,\Gamma_K)$-module over $E\otimes_F\B_{\mathrm{rig},K}^{\dagger}$, then we have $\nabla_\tau(x) = 0$ for all $x \in \D$ by remark \ref{fla=nablatau nul} and so $\nabla_\tau(M) \subset t_\tau \cdot M$ for $M = (E \otimes_F \Bt_{\mathrm{rig},K}^\dagger)^{\pa}\otimes_{E\otimes_F\B_{\mathrm{rig},K}^{\dagger}} \D$ by lemma \ref{partial stable}.
\end{proof}

\section{$(B,E)$-pairs}
In this section, we quickly recall the definitions of $(B,E)$-pairs and define the notion of $F$-analytic $(B,E)$-pairs.

Let $\Bdrplus, \Bdr, \B_{\mathrm{cris}}^+$ and $\B_{\mathrm{cris}}$ be the usual Fontaine's rings of $p$-adic periods, defined for example in \cite{fontaine1994corps}. These rings come equipped with an action of $\G_{\Qp}$, and the rings $\B_{\mathrm{cris}}^+$ and $\B_{\mathrm{cris}}$ are endowed with an injective Frobenius $\phi$. We let $\B_e = (\B_{\mathrm{cris}})^{\phi=1}$. Berger defined in \cite{BerBpaires} the notion of $B$-pairs, that is pairs $W = (W_e,W_{dR}^+)$, where $W_e$ is a free $\B_e$-module of finite rank, equipped with a semilinear continuous action of $\G_K$ and where $W_{dR}^+$ is a $\G_K$-stable $\Bdrplus$-lattice inside $W_{dR} = \Bdr \otimes_{\B_e}W_e$. To a $p$-adic representation $V$, one can attach the $B$-pair $W(V) = (\B_e \otimes_{\Qp}V, \Bdrplus \otimes_{\Qp}V)$, and the functor $V \mapsto W(V)$ is fully faithful since $\B_e \cap \Bdrplus = \Qp$. Recall that $t$ is the usual $t$ in $p$-adic Hodge theory (note that $t$ corresponds to the element $t_p$ for $F=\Qp$) and that $\Bdrplus/t\Bdrplus = \Cp$.

Berger showed \cite[Thm. 2.2.7]{BerBpaires} how to attach to any $B$-pair a cyclotomic $(\phi,\Gamma)$-module $D(W)$ on the (cyclotomic) Robba ring, and that this functor induces an equivalence of categories.  

Let $E$ be a field of coefficients as previously. Let $\B_{e,E} = E \otimes_{\Qp}\B_e$, $\B_{\mathrm{dR},E}^+ = E \otimes_{\Qp} \Bdrplus$ and $\B_{\mathrm{dR},E} = E \otimes_{\Qp} \Bdr$, where $\G_{\Qp}$ acts $E$-linearly on $E$. A $(B,E)$-pair is a pair $W = (W_e,W_{dR}^+)$, where $W_e$ is a free $\B_{e,E}$-module of finite rank, equipped with a semilinear continuous action of $\G_K$ and where $W_{dR}^+$ is a $\G_K$-stable $\B_{\mathrm{dR},E}^+$-lattice inside $W_{dR} = \B_{\mathrm{dR},E} \otimes_{\B_{e,E}}W_e$. To an $E$ representation $V$, one can attach the $(B,E)$-pair $W(V) = (\B_e \otimes_{\Qp}V, \Bdrplus \otimes_{\Qp}V)$, and this functor is once again fully faithful. Theorem 2.2.7 of \cite{BerBpaires} has been extended by Nakamura \cite[Thm. 1.36]{Nakapieds} for $(B,E)$-pairs and cyclotomic $E$-$(\phi,\Gamma)$-modules, that is $(\phi,\Gamma)$-modules over the cyclotomic Robba ring tensored by $E$ over $\Qp$.

Let $F, E$ be as in \S 1. Note that we have an isomorphism $E \otimes_{\Qp}F \simeq \prod\limits_{\tau \in \Sigma}E$, given by $a \otimes b \mapsto (a \tau(b))_{\tau \in \Sigma}$. Since $F \subset \Bdrplus$, we have natural isomorphisms 
$$E \otimes_{\Qp}\Bdrplus \simeq (E \otimes_{\Qp} F) \otimes_F \Bdrplus \simeq (\prod_{\tau \in \Sigma}E) \otimes_F \Bdrplus \simeq \prod\limits_{\tau \in \Sigma}\B_{\mathrm{dR},\tau}^+$$
where $\B_{\mathrm{dR},\tau}^+ = E \otimes_F^\tau \Bdrplus$, and
$$E \otimes_{\Qp}\Bdr \simeq \prod\limits_{\tau \in \Sigma}\B_{\mathrm{dR},\tau}$$
where $\B_{\mathrm{dR},\tau} = E \otimes_F^\tau \Bdr$.

Using these decompositions, we get decompositions $W_{dR}^+ \simeq \prod\limits_{\tau \in \Sigma}W_{dR,\tau}^+$ and $W_{dR} \simeq \prod\limits_{\tau \in \Sigma}W_{dR,\tau}$.

We say that a $(B,E)$-pair is $F$-analytic if for all $\tau \in \Sigma_0$, $W_{dR,\tau}^+/tW_{dR,\tau}^+$ is the trivial $\Cp$-semilinear representation of $\G_K$. The following lemma shows that this definition is compatible with the one of $F$-analytic representation:

\begin{lemm}
Let $V$ be an $E$-representation of $\G_K$. Then $V$ is $F$-analytic if and only if the $(B,E)$-pair $W(V) = (W_e, W_{\mathrm{dR}}^+) =  (\B_e \otimes_{\Qp}V, \Bdrplus \otimes_{\Qp}V)$ is $F$-analytic.
\end{lemm}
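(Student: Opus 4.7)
The plan is to unwind the definition of $W_{\mathrm{dR},\tau}^+/tW_{\mathrm{dR},\tau}^+$ and identify it with the space $\Cp \otimes_F^\tau V$ that appears in the definition of $F$-analyticity of $V$, so that the two conditions become literally the same.

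First, I would rewrite $W_{\mathrm{dR}}^+ = \Bdrplus \otimes_{\Qp} V$ as $(E \otimes_{\Qp} \Bdrplus) \otimes_E V$, which is legitimate since $V$ is an $E$-module. Applying the isomorphism $E \otimes_{\Qp} \Bdrplus \simeq \prod_{\tau \in \Sigma}\B_{\mathrm{dR},\tau}^+$ recalled just before the lemma, this gives for the $\tau$-component
$$W_{\mathrm{dR},\tau}^+ = (E \otimes_F^\tau \Bdrplus) \otimes_E V.$$
Since $\Bdrplus/t\Bdrplus = \Cp$ as a $\G_K$-module, reducing modulo $t$ then yields
$$W_{\mathrm{dR},\tau}^+/tW_{\mathrm{dR},\tau}^+ \simeq (E \otimes_F^\tau \Cp) \otimes_E V.$$

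Second, I would exhibit a $\G_K$-equivariant $\Cp$-linear isomorphism $(E \otimes_F^\tau \Cp) \otimes_E V \to \Cp \otimes_F^\tau V$ by sending $(e \otimes c) \otimes v$ to $c \otimes (ev)$, where on the right-hand side $V$ is viewed as an $F$-vector space via $F \hookrightarrow E$ and $\Cp$ is viewed as an $F$-algebra via $\tau$; the inverse sends $c \otimes v$ to $(1 \otimes c) \otimes v$. The only subtle point is $F$-balancedness: the relation $(ef) \otimes c = e \otimes \tau(f) c$ in $E \otimes_F^\tau \Cp$ must match $c \otimes (efv) = \tau(f) c \otimes (ev)$ in $\Cp \otimes_F^\tau V$, which holds because $E$ is commutative and the $F$-structure on $V$ comes from $F \hookrightarrow E$. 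The $\G_K$-equivariance is automatic since the action on both sides is induced from the action on $\Cp$ (the factors $E$ and $V$ carry trivial Galois actions from the tensor decomposition point of view relevant here).

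Finally, by the definition given just above the lemma, the $(B,E)$-pair $W(V)$ is $F$-analytic iff $W_{\mathrm{dR},\tau}^+/tW_{\mathrm{dR},\tau}^+$ is the trivial $\Cp$-semilinear representation of $\G_K$ for every $\tau \in \Sigma_0$; via the identification above this is exactly the condition that $\Cp \otimes_F^\tau V$ be trivial for every $\tau \neq \id$, which is the definition of $F$-analyticity for $V$ (applied to $V$ viewed as an $F$-representation, and justified by Lemma~\ref{fanelin} when one prefers to phrase the condition in terms of $\Gal(E/\Qp)$). There is no real obstacle here beyond careful tensor bookkeeping; no deep input is required.
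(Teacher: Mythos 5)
Your proof is correct and follows essentially the same route as the paper's (the paper simply writes $W_{dR}^+/tW_{dR}^+ = \Cp \otimes_{\Qp} V \simeq \prod_\tau (\Cp \otimes_F^\tau V)$ and reads off the $\tau$-component; you spell out the tensor bookkeeping in more detail). One small slip of the pen: the Galois action on $V$ is certainly not trivial (it is the given $E$-linear $\G_K$-action), so the parenthetical justification of equivariance is misstated, though the conclusion is right because the map visibly intertwines the diagonal actions on $\Cp$ and $V$; also, depending on the exact convention for $E \otimes_F^\tau \Cp$ the identification you exhibit is with $\Cp \otimes_F^{\tau^{-1}} V$ rather than $\Cp \otimes_F^\tau V$, but since $\tau \mapsto \tau^{-1}$ permutes $\Sigma_0$ this has no effect on the equivalence.
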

\begin{proof}
We have $\Bdrplus/t\Bdrplus = \Cp$, so that $W_{dR}^+/tW_{dR}^+ = \Cp \otimes_{\Qp}V \simeq \prod\limits_{\tau \in \Sigma}(\Cp \otimes_F^\tau V)$, and $W_{dR,\tau}^+/tW_{dR,\tau}^+ = \Cp \otimes_F^\tau V$, and so the equivalence is clear.
\end{proof}

\begin{lemm}
\label{lemm BeE Brig1/t}
We have $\B_{e,E} = E \otimes_F (\Bt_{\mathrm{rig}}^{\dagger}[1/t])^{\phi_q=1}$. 	
\end{lemm}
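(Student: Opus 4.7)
The plan is to reduce the assertion to the cyclotomic case ($F=\Qp$), where it takes the familiar form $\B_e = (\Bt_{\mathrm{rig},\eta}^{\dagger}[1/t])^{\phi=1}$ (a classical result of Berger), and then to recover the Lubin-Tate case by Galois descent along the unramified sub-extension $F_0/\Qp$.

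First I would use the identification $\Btrig = F \otimes_{F_0} \Bt_{\mathrm{rig},\eta}^{\dagger}$ recalled in the paper, under which the Lubin-Tate Frobenius $\phi_q$ corresponds to $\id_F \otimes \phi^h$ (where $\phi$ is the usual Witt-vector Frobenius on $\Bt_{\mathrm{rig},\eta}^{\dagger}$); in particular $\phi_q$ acts as the identity on $F$. Since $F$ is finite free, hence flat, over $F_0$, the invariants of $\phi_q = \id_F \otimes \phi^h$ commute with $F \otimes_{F_0} -$, giving
$$(\Btrig[1/t])^{\phi_q=1} = F \otimes_{F_0} (\Bt_{\mathrm{rig},\eta}^{\dagger}[1/t])^{\phi^h=1}.$$
After further tensoring with $E$ over $F$, the claim becomes the identity $E \otimes_{F_0} (\Bt_{\mathrm{rig},\eta}^{\dagger}[1/t])^{\phi^h=1} = E \otimes_{\Qp} \B_e$.

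Second, set $R = \Bt_{\mathrm{rig},\eta}^{\dagger}[1/t]$ and recall that $F_0 = W(k_F)[1/p] \hookrightarrow R$, on which $\phi$ restricts to the order-$h$ Frobenius generator $\sigma$ of $\Gal(F_0/\Qp)$, with fixed field $\Qp$. Then $R^{\phi^h=1}$ is an $F_0$-vector space carrying a $\sigma$-semilinear action of the cyclic group $\langle\phi\rangle \simeq \Gal(F_0/\Qp)$, whose $\phi$-fixed part is exactly $R^{\phi=1}$. Standard Galois descent (i.e.\ the additive Hilbert 90 $H^1(\Gal(F_0/\Qp),F_0)=0$) for the finite Galois extension $F_0/\Qp$ then yields an isomorphism $F_0 \otimes_{\Qp} R^{\phi=1} \simeq R^{\phi^h=1}$; combined with Berger's cyclotomic identification $R^{\phi=1} = \B_e$, assembling everything gives
$$E \otimes_F (\Btrig[1/t])^{\phi_q=1} = E \otimes_{F_0} F_0 \otimes_{\Qp} \B_e = E \otimes_{\Qp} \B_e = \B_{e,E},$$
as desired.

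The only non-routine step is the Galois descent, which I would make concrete by fixing a normal basis $\{\sigma^i(\alpha)\}_{0 \leq i < h}$ of $F_0/\Qp$: for any $m \in R^{\phi^h=1}$, the elements $T_i(m) := \sum_{j=0}^{h-1} \sigma^j\sigma^i(\alpha) \cdot \phi^j(m)$ lie in $R^{\phi=1}$, and invertibility of the matrix $(\sigma^j\sigma^i(\alpha))_{i,j}$ (which encodes linear independence of the characters $\sigma^0,\hdots,\sigma^{h-1}$) recovers $m$ as an explicit $F_0$-linear combination of $\phi$-invariants, giving both surjectivity and injectivity of the descent map. Everything else in the proof is formal: the commutation of invariants with flat base change, and the quotation of the cyclotomic statement.
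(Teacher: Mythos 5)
Your argument is correct and follows the same route as the paper: reduce via $\Btrig = F \otimes_{F_0} \Bt_{\mathrm{rig},\eta}^{\dagger}$ and flatness to the identity $(\Bt_{\mathrm{rig},\eta}^{\dagger}[1/t])^{\phi^h=1} = F_0 \otimes_{\Qp} \B_e$, which the paper obtains by invoking Speiser's lemma and you obtain by the equivalent normal-basis Galois descent. The only difference is that you spell out the descent explicitly rather than citing the lemma by name.
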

\begin{proof}
First, recall that $\B_e = (\Bt_{\mathrm{rig},\eta}^\dagger[1/t])^{\phi=1}$ (this is \cite[Lemm. 1.1.7]{BerBpaires}). Since $\phi_q$ is $F$-linear, we have $(\Bt_{\mathrm{rig}}^\dagger[1/t])^{\phi_q=1}=(F \otimes_{F_0}\Bt_{\mathrm{rig},\eta}^\dagger[1/t])^{\phi_q=1} = F \otimes_{F_0} (\Bt_{\mathrm{rig},\eta}^\dagger[1/t])^{\phi^h=1}$. Now since $\Gal(F_0/\Qp)$ acts $F_0$-semi-linearly on $(\Bt_{\mathrm{rig},\eta}^\dagger[1/t])^{\phi^h=1}$ by $\phi$, Speiser's lemma implies that $(\Bt_{\mathrm{rig},\eta}^\dagger[1/t])^{\phi^h=1}= F_0 \otimes_{\Qp}\B_e$. Thus, we get that 
$$\B_{e,E} = E \otimes_{\Qp} \B_e = E \otimes_{F} F \otimes_{F_0} (F_0 \otimes \B_e)$$
and what we just did implies that
$$\B_{e,E} = E \otimes_F (\Bt_{\mathrm{rig}}^\dagger[1/t])^{\phi_q=1},$$
which is what we wanted.

Note that this is basically the same proof (tensored by $E$ over $F$) as \cite[Prop. 2.2]{BMTensTriang} but using $\Bt_{\mathrm{rig}}^\dagger[1/t]$ instead of $F \otimes_{F_0} \B_{\mathrm{cris}}$.
\end{proof}

\begin{lemm}
\label{prod t_tau}~
\begin{enumerate}
\item The $t$-adic valuation of the $\tau'$-component of the image of $t_\tau$ by the map $\Bt_{\mathrm{rig}}^+ \rightarrow F\otimes_{\Qp}\Bdr = \prod_{\tau' \in \Sigma}\Bdr$ given by $x \mapsto \{(\tau' \otimes\phi^{n(\tau')})(x)\}_{\tau' \in \Sigma}$ is $1$ if $\tau' = \tau^{-1}$ and $0$ otherwise. 
\item There exists $u \in (F \otimes \hat{\Q_p}^{\mathrm{unr}})^{\times}$ such that $\prod_{\tau \in \Sigma}t_\tau = u \cdot t$ in $\Bt_{\mathrm{rig}}^+$.
\end{enumerate}
\end{lemm}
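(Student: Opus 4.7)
The approach is to reduce part (1) to a vanishing/non-vanishing computation for $\theta(y_\sigma)$, $\sigma\in\Sigma$, by exploiting that $\log_{\LT}(T) = T + O(T^2)$, and then to deduce part (2) from part (1) together with a Galois-equivariance argument applied to $\prod_\tau t_\tau / t$.

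For part (1), the factorization $\log_{\LT}(T) = T \cdot \prod_{k \geq 1} Q_k(T)/\pi$ with $Q_k(0) = \pi$ for $k \geq 1$ implies that $\log_{\LT}(T)/T$ evaluates to a unit of $\Bdrplus$ at any element of $\ker \theta = t \Bdrplus$ at which the product converges. Consequently $v_t(t_\sigma) = v_t(y_\sigma)$ whenever $y_\sigma \in \ker \theta$, and $t_\sigma$ is a $\Bdrplus$-unit otherwise. Unwinding the composition of twists, $(\tau' \otimes \phi^{n(\tau')})(t_\tau) = (\tau'\tau \otimes \phi^{n(\tau')+\tilde n(\tau)})(t_\pi)$, and after absorbing the factor of $\pi$ that appears whenever $n(\tau') + \tilde n(\tau) \geq h$ (via $\phi_q(t_\pi) = \pi t_\pi$, which preserves $v_t$), this is the element $t_{\tau'\tau}$ up to a unit. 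Part (1) is therefore equivalent to the assertion that $v_t(t_\sigma) = 1$ if $\sigma = \id$ and $v_t(t_\sigma) = 0$ otherwise. For $\sigma = \id$ we have $y_\id = u$; the Galois equivariance of $\theta$ combined with $g(u) = [\chi_\pi(g)](u)$ forces $\theta(u)$ to satisfy $g(\theta(u)) = [\chi_\pi(g)](\theta(u))$ in $\OO_{\Cp}$, and the Lubin-Tate normalization $u_0 = 0$ then yields $\theta(u) = 0$, so $u \in t\Bdrplus$ and $v_t(t_\pi) = v_t(u) = 1$ because $\chi_\pi$ has Hodge-Tate weight $1$ in the identity embedding. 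For $\sigma \neq \id$, the nonzero coordinates $\ubar_n = u_n \bmod \pi$ (which are nonzero for $n \geq 1$) survive the Frobenius twist $\phi^{\tilde n(\sigma)}$ and contribute a nonzero sharp image in $\OO_{\Cp}$, giving $\theta(y_\sigma) \neq 0$ and hence $v_t(t_\sigma) = 0$.

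For part (2), part (1) implies $v_t(\prod_\tau t_\tau) = 1$ in each component of $\prod_{\tau'} \Bdr$, as in the $\tau'$-slot exactly one factor (the one indexed by $\tau = \tau'^{-1}$) contributes $1$ while all others contribute $0$. Hence $v := \prod_\tau t_\tau / t$ is a unit of $F \otimes_{\Qp} \Bdrplus$. For the Galois equivariance of $v$, the relation $g(t_\tau) = \tau(\chi_\pi(g)) \cdot t_\tau$ together with the identity $N_{F/\Qp}(\chi_\pi) = \eta \chi_{\mathrm{cycl}}$ from Section~\ref{section LT} gives
\[g\Bigl(\prod_{\tau \in \Sigma} t_\tau\Bigr) = N_{F/\Qp}(\chi_\pi(g)) \cdot \prod_{\tau \in \Sigma} t_\tau = \eta(g) \chi_{\mathrm{cycl}}(g) \cdot \prod_{\tau \in \Sigma} t_\tau,\]
so combining with $g(t) = \chi_{\mathrm{cycl}}(g) t$ yields $g(v) = \eta(g) \cdot v$. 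Since $\eta$ is unramified, $v$ is fixed by the inertia subgroup $I_F$ of $\G_F$, hence lies in the $I_F$-invariants of $F \otimes_{\Qp} \Bdrplus$, which are exactly $F \otimes_{\Qp} \hat{\Q_p}^{\mathrm{unr}}$.

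The main obstacle I anticipate is the non-vanishing $\theta(y_\sigma) \neq 0$ for $\sigma \neq \id$ in part (1); this is a genuinely Lubin-Tate computation tracking how the embedding $\sigma$ and the Witt Frobenius power $\phi^{\tilde n(\sigma)}$ interact on the element $u \in \Atplus$, and requires careful unpacking of the tensor decomposition $\Atplus = \OO_F \otimes_{\OO_{F_0}} W(\Etplus)$. Once this point is settled, the remainder of the argument is a clean formal manipulation based on the defining relations $\phi_q(t_\pi) = \pi t_\pi$, $g(t_\pi) = \chi_\pi(g) t_\pi$, and the norm identity $N_{F/\Qp}(\chi_\pi) = \eta \chi_{\mathrm{cycl}}$.
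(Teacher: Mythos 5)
The paper disposes of this lemma by a citation to \cite[Prop.~2.4]{BMTensTriang}, so your from-scratch argument is necessarily a different route. The overall architecture of your proposal is sensible — reduce part~(1) to the computation of $\theta(y_\sigma)$ for each $\sigma\in\Sigma$, then deduce part~(2) by a Galois-descent argument on $v=\prod_\tau t_\tau/t$ — but the two pivotal computations in part~(1) are not actually carried out, and the stated justifications do not hold up.

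\textbf{The claim $\theta(u)=0$.} You argue that $g(\theta(u))=[\chi_\pi(g)](\theta(u))$ and that ``the Lubin-Tate normalization $u_0=0$ then yields $\theta(u)=0$.'' This does not follow. The condition $u_0=0$ (i.e.\ $\ubar_0=0$) only gives $\theta(u)\equiv 0\bmod\pi$, i.e.\ $v_p(\theta(u))\geq 1/e$, which leaves plenty of room for a nonzero value — for comparison $v_p(\theta([\ubar]))=v_{\E}(\ubar)=q/(q-1)e>1/e$ and $\theta([\ubar])\neq 0$. To conclude $\theta(u)=0$ from the equivariance $g(a)=[\chi_\pi(g)](a)$ one needs an actual input such as Tate's theorem on $\Cp$-admissibility applied to $\log_{\LT}(a)$, and even then there is a borderline torsion case (e.g.\ when $q=2$ a primitive $\pi$-torsion point has $v_p=1/e$ and is fixed by $\G_F$, so it is not excluded by your constraints). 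The step is a genuine gap, not a routine deduction.

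\textbf{The claim $\theta(y_\sigma)\neq 0$ for $\sigma\neq\id$.} You write that ``the nonzero coordinates $\ubar_n$ survive the Frobenius twist $\phi^{\tilde n(\sigma)}$ and contribute a nonzero sharp image.'' This is not an argument: $\theta$ depends on the whole Witt-vector expansion of $y_\sigma=(\sigma\otimes\phi^{\tilde n(\sigma)})(u)$, not merely on the reduction of $u$ in $\Etplus$, and the way $\sigma$ and $\phi^{\tilde n(\sigma)}$ interact on the decomposition $\Atplus=\OO_F\otimes_{\OO_{F_0}}W(\Etplus)$ is precisely the content of this computation. You explicitly flag this as ``the main obstacle'' and ``a genuinely Lubin-Tate computation'' that you have not unpacked; indeed it is the heart of part~(1) and must be done before the rest of the argument can stand. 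Finally, in part~(2) the identification $(\B_{\mathrm{dR}}^+)^{I_F}=\hat{\Q_p}^{\mathrm{unr}}$ is not quite right — the $I_F$-invariants of $\B_{\mathrm{dR}}^+$ are $\hat{F^{\mathrm{unr}}}=F\otimes_{F_0}\hat{\Q_p}^{\mathrm{unr}}$, not $\hat{\Q_p}^{\mathrm{unr}}$ itself (these agree only when $F/\Q_p$ is unramified); you should make the tensor explicit to match the statement of the lemma.
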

\begin{proof}
These are items 2 and 3 of \cite[Prop. 2.4]{BMTensTriang}, using $\Bt_{\mathrm{rig}}^+$ instead of $F\otimes_{F_0}\B_{\mathrm{cris}}^+$.
\end{proof}

We now explain how to attach $F$-analytic $(B,E)$-pairs to $F$-analytic $(\phi_q,\Gamma_K)$-modules over $(E \otimes_F \B_{\mathrm{rig},K}^\dagger)$ and vice-versa.

Lemma \ref{lemm BeE Brig1/t} allows us to see $E \otimes_F \Bt_{\mathrm{rig}}^{\dagger}[1/t]$ as a $\B_{e,E}$-module.

Let $\Omega = \{ (\tau,n) \in \Gal(E/\Qp) \times \Z$ such that $n(\tau{\mid_F}) \equiv n \bmod{h}\}$. For $n \geq 0$, let $r_n = p^{n-1}(p-1)$, and for $r > 0$, let $n(r)$ be the least integer $n$ such that  $r_n \geq r$. For $r \geq 0$, we let $\Omega_r = \{ (\tau,n) \in \Omega$ such that $n \geq n(r)\}$. For $g =(\tau,n) \in \Omega$, we let $\tau(g) = \tau$ and $n(g) = n$. If $\min(I) \geq r$ and if $g \in \Omega_r$, we have a map $\iota_g : E \otimes_F \Bt^I \to E \otimes_F^{\tau(g)_{|F}} \Bdrplus = \B_{\mathrm{dR},\tau(g)_{|F}}$, defined in \cite[\S 5]{Ber14MultiLa} and given by $x \mapsto (g^{-1} \otimes (g{|_F^{-1}} \otimes \phi^{-n(g)})) (x)$. 

\begin{lemm}
\label{lemm intermediaire Bpairs to phigamma}
Let $W$ be a $(B,E)$-pair of rank $d$, and let 
$$\tilde{D}^r(W)= \left\{y \in (E\otimes_F\Bt_{\mathrm{rig}}^{\dagger,r}[1/t])\otimes_{\B_{e,E}}W_e \textrm{ such that } \iota_g(y) \in W_{dR,\tau(g)_{|F}}^+ \textrm{ for all } g \in \Omega_r \right\}.$$ Then:
\begin{enumerate}
\item $\tilde{D}^r(W)$ is a free $E\otimes_F\Bt_{\mathrm{rig}}^{\dagger,r}$-module of rank $d$;
\item $\tilde{D}^r(W)[1/t] = (E\otimes_F\Bt_{\mathrm{rig}}^{\dagger,r}[1/t])\otimes_{\B_{e,E}}W_e$.
\end{enumerate}
\end{lemm}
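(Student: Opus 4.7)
The overall strategy mirrors Berger's construction in \cite{BerBpaires} of the $(\phi,\Gamma)$-module attached to a $B$-pair, adapted here to the Lubin-Tate setting with $E$-coefficients and to the family of localization maps $\iota_g$ parametrized by $\Omega_r$ rather than just by $n \geq n(r)$. Throughout, fix a $\B_{e,E}$-basis $v_1,\ldots,v_d$ of $W_e$, which also serves as a basis of $M := (E\otimes_F\Btrigr[1/t])\otimes_{\B_{e,E}}W_e$ over $E\otimes_F\Btrigr[1/t]$.

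\textbf{Part (2) first.} I would argue that for any $y \in M$, a suitable power of $t$ brings $y$ into $\tilde{D}^r(W)$. Writing $y = \sum x_i v_i$ with $x_i \in E\otimes_F\Btrigr[1/t]$, the element $\iota_g(y)$ lies in $W_{dR,\tau(g)_{|F}}$, which contains the lattice $W_{dR,\tau(g)_{|F}}^+$. Since $W_{dR,\tau(g)_{|F}}^+$ is a $\B_{\mathrm{dR},\tau(g)_{|F}}^+$-lattice, some $t^{n_g} \iota_g(y)$ lies in $W_{dR,\tau(g)_{|F}}^+$. The task is then to extract uniform $n = \sup_g n_g < \infty$; this follows because the pole order of $\iota_g(y)$ at $t$ is controlled by the pole orders of the $x_i$ at the divisors $\{\iota_g = 0\}$, which by Lemma \ref{prod t_tau} and the product formula $\prod_\tau t_\tau = u\cdot t$ are bounded uniformly in $g \in \Omega_r$. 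Hence $t^n y \in \tilde{D}^r(W)$ for some $n$ independent of $g$, which gives $\tilde{D}^r(W)[1/t] = M$.

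\textbf{Part (1): freeness of rank $d$.} I would use Lemma \ref{Forster} and reduce to showing $\tilde{D}^r(W)$ is finitely generated and closed inside a free $E\otimes_F\Btrigr$-module of finite type. Closedness is clear: each condition $\iota_g(y) \in W_{dR,\tau(g)_{|F}}^+$ cuts out a closed subset, and $\tilde{D}^r(W)$ is their intersection inside $M$. For finite generation, I would start from $N_0 = \bigoplus_{i=1}^d (E\otimes_F\Btrigr[1/t]) \cdot v_i$ and, following the argument of \cite[\S 2.2]{BerBpaires}, produce a finitely generated $E\otimes_F\Btrigr$-submodule sandwiched between $t^n N_0$ and $t^{-n} N_0$ that enforces all integrality conditions simultaneously. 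The fact that $\phi_q$ permutes the maps $\iota_g$ with $(\tau,n)\mapsto(\tau,n+1)$ is essential: modulo this Frobenius action, only the boundary conditions at $n = n(r)$ (finitely many for each $\tau \in \Sigma$) are independent, so only finitely many independent modifications of $N_0$ suffice. Once finite generation is established, Lemma \ref{Forster} yields freeness, and inverting $t$ in $\tilde{D}^r(W)$ recovers $M$ by part (2), forcing the rank to be $d$.

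\textbf{Main obstacle.} The delicate point is controlling the infinite family of integrality conditions indexed by $\Omega_r$ in part (1). Imposing integrality at a single $g$ is easy (one modifies $v_j$ by an element of $W_{dR,\tau(g)_{|F}}^+$ times a power of $t$), but one must check that doing this for all $g$ simultaneously does not destroy finite generation or the previously imposed conditions. The argument pivots on the compatibility $\iota_{(\tau,n+1)} = \iota_{(\tau,n)} \circ \phi_q^{-1}$, which collapses the infinite family modulo $\phi_q$ to a finite one — this is the Lubin-Tate analogue of the standard reduction in \cite{BerBpaires}, and is where the interplay between the new parameter space $\Omega_r$ and the Frobenius $\phi_q$ has to be handled carefully.
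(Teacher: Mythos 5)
Your overall strategy matches the paper's: invoke Lemma \ref{Forster}, establish the integrality conditions modulo a power of $t$, and exploit the Frobenius compatibility to reduce the infinite family of conditions indexed by $\Omega_r$ to a tractable one. However there are two genuine gaps.

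First, to apply Lemma \ref{Forster} you must exhibit $\tilde{D}^r(W)$ as a submodule of a \emph{free $E\otimes_F\Btrigr$-module of finite rank}, not merely as a closed subset of $M = (E\otimes_F\Btrigr[1/t])\otimes_{\B_{e,E}}W_e$, which is free over $\Btrigr[1/t]$ but not over $\Btrigr$. The paper supplies this containing module explicitly: for a single $g=(\tau,n(r))$ the module $\tilde{D}^r_{n(r),\tau}(W) = \{y : \iota_g(y) \in W_{dR,\tau}^+\}$ is free of rank $d$ over $E\otimes_F\Btrigr$, because one can lift a $\Bdrplus$-basis of the lattice $W_{dR,\tau}^+$ through $\iota_g$. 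Then $\tilde{D}^r(W)$ is closed inside this free module, and Lemma \ref{Forster} applies. Your sandwiching approach would ultimately have to produce the same kind of containment $\tilde{D}^r(W) \subset t^{-n}N_0$, but you have not proved any such upper bound; without it, neither ``closed'' nor ``finitely generated'' lands you in the hypotheses of Lemma \ref{Forster}, and the freeness in the single-condition case is precisely the nontrivial input that anchors the argument.

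Second, your uniformity claim in part (2) is imprecise. The product formula $\prod_\tau t_\tau = u\cdot t$ and Lemma \ref{prod t_tau} control how the divisors of the $t_\tau$ distribute among the $\iota_g$, but on their own they do not bound the required power of $t$ uniformly over $n(g)$. The uniformity in $n(g)$ comes from the fact that a $\B_{e,E}$-basis $e_1,\ldots,e_d$ of $W_e$ satisfies $\phi_q(e_i) = e_i$ (extend $\phi_q$ by the identity on $W_e$), so $\iota_g(e_i) = 1\otimes e_i$ in $W_{dR,\tau(g)|_F}$ is \emph{independent of $n(g)$}; one fixed $m_\tau$ then makes $\iota_g(t_\tau^{m_\tau}e_i)$ integral for every $g$ with $\tau(g)=\tau$. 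Combined with the decomposition $\B_{e,E}=\bigcup_{j\geq0}E\otimes_F(\Bt_{\mathrm{rig}}^+)^{\phi_q=q^j}t^{-j}$ to bound the $t$-poles of the coefficients, this gives the uniform $m$. You gesture at the relation $\iota_{(\tau,n+h)} = \iota_{(\tau,n)}\circ\phi_q^{-1}$ only in your ``main obstacle'' remark and only as a heuristic; it is in fact the load-bearing input to part (2), and you should note that the step from $n$ to $n+1$ is by the single Frobenius $\phi$, while $\phi_q=\phi^h$ connects indices within a fixed restriction $\tau|_F$.

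Once both points are supplied, the argument closes as you describe: part (2) plus the rank bound $\leq d$ from Lemma \ref{Forster} force the rank to be exactly $d$.
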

\begin{proof}
This is \cite[Lemm. 2.2.1]{BerBpaires} tensored by $E$.
\end{proof}

If $W$ is a $(B,E)$-pair, we let $\tilde{D}(W) = (E\otimes_F\Bt_{\mathrm{rig}}^{\dagger}) \otimes_{E\otimes_F\Bt_{\mathrm{rig}}^{\dagger,r}}\tilde{D}^r(W)$, and if $I$ is a subinterval of $[r;+\infty[$, we let $\tilde{D}^I(W) = (E \otimes_F \Bt^I)\otimes_{E\otimes_F\Bt_{\mathrm{rig}}^{\dagger,r}}\tilde{D}^r(W)$. By the same argument as in \cite[Lemm. 2.2.2]{BerBpaires}, this does not depend on the choice of $r \in I$. 

\begin{prop}
\label{phigamma eta inside Dtilde}
If $W$ is a $(B,E)$-pair of rank $d$, then there exists a unique $(\phi_q,\Gamma_K')$-module $\D_\eta(W)$ over $E \otimes_{F_0} \B_{\mathrm{rig},K,\eta}^\dagger$ such that $(E \otimes_{F_0} \Bt_{\mathrm{rig}}^\dagger) \otimes_{E \otimes_{F_0} \B_{\mathrm{rig},K,\eta}^\dagger}\D_\eta(W) = \tilde{D}(W)$.
\end{prop}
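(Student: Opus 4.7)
The plan is to mimic the proof of Theorem~2.2.7 of \cite{BerBpaires}, extended to $E$-coefficients as in \cite[Thm.~1.36]{Nakapieds} and transported to the setting where the cyclotomic character is replaced by $\eta\chi_{\mathrm{cycl}}$. Using the identification $E \otimes_F \Bt_{\mathrm{rig}}^\dagger = E \otimes_{F_0} \Bt_{\mathrm{rig},\eta}^\dagger$ and Lemma \ref{lemm intermediaire Bpairs to phigamma}, $\tilde{D}(W)$ is a free $E \otimes_{F_0} \Bt_{\mathrm{rig},\eta}^\dagger$-module of rank $d$ with commuting continuous semilinear actions of $\phi_q$ and $\G_K$; the task is to exhibit a $(\phi_q,\Gamma_K')$-stable $E \otimes_{F_0} \B_{\mathrm{rig},K,\eta}^\dagger$-lattice realizing the claimed base change.

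The first step is a Galois descent along $H_K'$. Since $(\Bt_{\mathrm{rig},\eta}^\dagger)^{H_K'} = \Bt_{\mathrm{rig},K,\eta}^\dagger$, one shows that $\tilde{D}(W)^{H_K'}$ is a free $E \otimes_{F_0} \Bt_{\mathrm{rig},K,\eta}^\dagger$-module of rank $d$ with $\tilde{D}(W) = (E \otimes_{F_0} \Bt_{\mathrm{rig},\eta}^\dagger) \otimes_{E \otimes_{F_0} \Bt_{\mathrm{rig},K,\eta}^\dagger} \tilde{D}(W)^{H_K'}$. This follows from a standard Hilbert~90-style descent together with Lemma \ref{Forster}, once one checks that $\tilde{D}(W)^{H_K'}$ is closed and finitely generated as a submodule of $\tilde{D}(W)$, a point handled exactly as in \cite[\S 2.2]{BerBpaires} and in Nakamura's $E$-linear extension.

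Next, descend from $\Bt_{\mathrm{rig},K,\eta}^\dagger$ to $\B_{\mathrm{rig},K,\eta}^\dagger$ by extracting pro-analytic vectors for the $\Gamma_K'$-action. The cyclotomic analog of Theorem \ref{struclocana}(3) gives $(\Bt_{\mathrm{rig},K,\eta}^\dagger)^{\pa} = \B_{\mathrm{rig},K,\eta,\infty}^\dagger := \bigcup_{n \geq 0} \phi_q^{-n}(\B_{\mathrm{rig},K,\eta}^\dagger)$, and combining this with Proposition \ref{lainla and painpa} shows that $\tilde{D}(W)^{H_K',\pa}$ is a free $E \otimes_{F_0} \B_{\mathrm{rig},K,\eta,\infty}^\dagger$-module of rank $d$. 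Since $\Gamma_K'$ is topologically of finite type, there exists $n$ so that a suitable basis has the matrices of $\phi_q$ and of all $g \in \Gamma_K'$ in $\GL_d(E \otimes_{F_0} \B_{\mathrm{rig},K,\eta,n}^\dagger)$; applying $\phi_q^n$ to that basis yields the desired lattice $\D_\eta(W)$ over $E \otimes_{F_0} \B_{\mathrm{rig},K,\eta}^\dagger$, in complete analogy with the construction carried out in Proposition \ref{prop eq phigamma fan}.

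Uniqueness is formal: two candidate descents differ by a change-of-basis matrix $X$ which a priori lies in $\GL_d(E \otimes_{F_0} \B_{\mathrm{rig},K,\eta,n}^\dagger)$ for some $n$, and the Frobenius relation $X = P_2^{-1}\phi_q(X) P_1$ forces $X \in \GL_d(E \otimes_{F_0} \B_{\mathrm{rig},K,\eta}^\dagger)$, exactly as in the uniqueness argument of Proposition \ref{prop eq phigamma fan}. The step I expect to be the main obstacle is the $H_K'$-descent for a general (possibly non-étale) $(B,E)$-pair: one must verify carefully that the invariant submodule is finitely generated and closed, with the correct generic rank, while keeping track of the $E$-tensor factor and of the mismatch between the Lubin-Tate cocycle data entering $\tilde{D}(W)$ and the cyclotomic-style ring $\Bt_{\mathrm{rig},K,\eta}^\dagger$.
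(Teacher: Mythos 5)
Your proof takes a genuinely different route from the paper: the paper's own proof is a one-line citation to \cite[Prop.~2.2.5]{BerBpaires} (tensored by $E$, and twisted by $\eta$ as in \cite[\S 8]{Ber14MultiLa}), i.e.\ it delegates the construction of $\D_\eta(W)$ to Berger's original decompletion argument for $B$-pairs; it does \emph{not} build $\D_\eta(W)$ by extracting locally analytic vectors. It is only \emph{after} $\D_\eta(W)$ exists that the paper, in the proposition immediately following this one, appeals to \cite[\S 2.1]{KR09} and Proposition~\ref{lainla and painpa} to identify $\tilde{D}_K(W)^{\pa}$ with $(E\otimes_F \Bt_{\mathrm{rig},K}^\dagger)^{\pa}\otimes\D_\eta(W)$. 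You reverse this order, constructing $\D_\eta(W)$ directly as (a $\phi_q^n$-translate of) the pro-analytic vectors of the $H_K'$-invariants.

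That reversal introduces a real gap. You assert that ``combining the cyclotomic analog of Theorem~\ref{struclocana}(3) with Proposition~\ref{lainla and painpa} shows that $\tilde{D}(W)^{H_K',\pa}$ is a free $E\otimes_{F_0}\B_{\mathrm{rig},K,\eta,\infty}^\dagger$-module of rank $d$.'' But Proposition~\ref{lainla and painpa} goes in the opposite direction: it \emph{assumes} the module already has a basis in which the $\Gamma$-action is pro-analytic, and then describes $W^{\pa}$ in terms of $B^{\pa}$. It cannot be used to \emph{produce} such a basis. Producing enough pro-analytic vectors in a free $\Bt^I_{K,\eta}$-module equipped only with a continuous semilinear $\Gamma_K'$-action is precisely the content of the decompletion/overconvergence theorem (the module-level analogue of \cite[Thm.~4.4]{Ber14SenLa} in the $\Qp$-cyclotomic setting, or equivalently Cherbonnier--Colmez), which is the heart of Berger's Prop.~2.2.5 and which you never invoke. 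Without that input your Step~2 is circular. A smaller quibble: the ``Hilbert~90-style'' descent along $H_K'$ is not literal Hilbert~90, since $H_K'$ is an infinite profinite group acting on $\Bt_{\mathrm{rig},\eta}^\dagger$; the correct tool is the almost étale/Tate--Sen formalism, which is again part of what \cite[Prop.~2.2.5]{BerBpaires} packages.

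If you want to keep the locally analytic approach rather than citing \cite[Prop.~2.2.5]{BerBpaires}, the fix is to replace the appeal to Proposition~\ref{lainla and painpa} in Step~2 by the locally analytic descent theorem of \cite{Ber14SenLa} (freeness of $M^{\la}$ for $M$ a free $\Bt_K^I$-module of rank $d$ with continuous $\Gamma$-action), applied over each $\Bt^I_{K,\eta}$ and then glued along the Fréchet inverse system. With that in place, your Steps~2 and~3 (passing to $\phi_q^n$ of a basis, and the uniqueness argument via $X=P_2^{-1}\phi_q(X)P_1$) are correct and indeed parallel what the paper does in Proposition~\ref{prop eq phigamma fan}.
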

\begin{proof}
This is \cite[Prop. 2.2.5]{BerBpaires} up to a tensor product, and using the twisted cyclotomic case instead of the classical one, but again by using \cite[\S 8]{Ber14MultiLa}, it does not change the arguments of the proof.
\end{proof}

For $r \geq 0$ such that $\D_\eta(W)$ and all its structures are defined over $E \otimes_{F_0} \B_{\mathrm{rig},K,\eta}^{\dagger,r}$, we let $\D_\eta^r(W)$ be the associated $(E \otimes_{F_0} \B_{\mathrm{rig},K,\eta}^{\dagger,r})$-module so that $\D_\eta(W) = (E \otimes_F \B_{\mathrm{rig},K,\eta}^\dagger) \otimes_{E \otimes_{F_0} \B_{\mathrm{rig},K,\eta}^{\dagger,r}} \D_\eta^r(W)$. For $I = [r;s]$, we let $\D_\eta^I= (E \otimes_{F_0} \B_{K,\eta}^I) \otimes_{E \otimes_{F_0} \B_{\mathrm{rig},K,\eta}^{\dagger,r}} \D_\eta^r(W)$. Let $\tilde{D}^I_K(W) = (\tilde{D}^I(W))^{H_K}$ and $\tilde{D}_K(W) = \tilde{D}(W)^{H_K}$, so that $\tilde{D}^I_K(W) = (E\otimes_F \Bt_{K}^I)\otimes_{E \otimes_{F_0} \B_{K,\eta}^I}\D_\eta^I(W)$ and $\tilde{D}_K(W) = (E\otimes_F \Bt_{\mathrm{rig},K}^\dagger)\otimes_{E \otimes_{F_0} \B_{\mathrm{rig},K,\eta}^\dagger}\D_\eta(W)$ (since $\D_\eta(W)$ is invariant under $H_K$).

\begin{prop}
We have
\begin{enumerate}
\item $\tilde{D}_K^I(W)^{\la} = (E\otimes_F \Bt_{K}^I)^{\la}\otimes_{E \otimes_F \B_{K,\eta}^I}\D_\eta^I(W)$;
\item $\tilde{D}_K(W)^{\pa} = (E\otimes_F \Bt_{\mathrm{rig},K}^\dagger)^{\pa}\otimes_{E \otimes_F \B_{\mathrm{rig},K,\eta}^\dagger}\D_\eta(W)$.
\end{enumerate}
\end{prop}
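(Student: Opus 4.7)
The strategy is to reduce both claims to Proposition \ref{lainla and painpa} by exhibiting a basis of $\tilde{D}_K^I(W)$ (respectively $\tilde{D}_K(W)$) over $E\otimes_F\Bt_K^I$ (resp.\ over $E\otimes_F\Bt_{\mathrm{rig},K}^\dagger$) on which some open subgroup of $\Gamma_K$ acts by a globally analytic (resp.\ pro-analytic) matrix coefficient.

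First, for (1), I fix a basis $d_1,\ldots,d_d$ of $\D_\eta^I(W)$ as a free $E\otimes_F\B_{K,\eta}^I$-module (which exists thanks to Proposition \ref{phigamma eta inside Dtilde} and the flatness used to define $\D_\eta^I(W)$ from $\D_\eta^r(W)$). By the very definition
\[ \tilde{D}_K^I(W) = (E\otimes_F\Bt_K^I)\otimes_{E\otimes_F\B_{K,\eta}^I}\D_\eta^I(W),\]
the same $d_1,\ldots,d_d$ form a basis of $\tilde{D}_K^I(W)$ over $E\otimes_F\Bt_K^I$. The matrix $\mathrm{Mat}(g)$ of $g\in\Gamma_K$ in this basis has entries in $E\otimes_F\B_{K,\eta}^I$, and since the $\Gamma_K'$-action on the $(\phi_q,\Gamma_K')$-module $\D_\eta^I(W)$ is continuous, a standard argument (as in the discussion preceding Proposition \ref{lainla and painpa}, using that a continuous $\GL_d$-valued cocycle on a $p$-adic Lie group is analytic on a sufficiently small open subgroup) shows that there exists an open subgroup $H\subset \Gamma_K$ such that $g\mapsto \mathrm{Mat}(g)$ is globally analytic $H\to \GL_d(E\otimes_F\B_{K,\eta}^I)\subset \GL_d(E\otimes_F\Bt_K^I)$.

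Applying Proposition \ref{lainla and painpa} (1) with $B = E\otimes_F\Bt_K^I$ and the subgroup $H$, I obtain
\[ \tilde{D}_K^I(W)^{H\dan} = \bigoplus_{j=1}^d (E\otimes_F\Bt_K^I)^{H\dan}\cdot d_j.\]
Taking the union over the sequence of shrinking subgroups $H_n\subset H$ and using that $d_j\in \D_\eta^I(W)\subset (E\otimes_F\Bt_K^I)^{\la}$, I conclude
\[ \tilde{D}_K^I(W)^{\la} = \bigoplus_{j=1}^d (E\otimes_F\Bt_K^I)^{\la}\cdot d_j = (E\otimes_F\Bt_K^I)^{\la}\otimes_{E\otimes_F\B_{K,\eta}^I}\D_\eta^I(W),\]
where the last equality comes from the fact that each $d_j$ has coordinates $(0,\ldots,1,\ldots,0)$ in itself, hence scalar coefficients from $\D_\eta^I(W)$ can be reabsorbed.

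For (2), the argument is identical but uses the Fréchet/pro-analytic part of Proposition \ref{lainla and painpa}: I take a basis of $\D_\eta(W)$ over $E\otimes_F\B_{\mathrm{rig},K,\eta}^\dagger$, view it as a basis of $\tilde{D}_K(W)$ over $E\otimes_F\Bt_{\mathrm{rig},K}^\dagger$, and observe that the matrix of the $\Gamma_K$-action, having entries in $E\otimes_F\B_{\mathrm{rig},K,\eta}^\dagger\subset (E\otimes_F\Bt_{\mathrm{rig},K}^\dagger)^{\pa}$, is a pro-analytic function on some open subgroup. The expected main obstacle is precisely the verification that the continuous $\Gamma_K$-action on the chosen basis is (globally, resp.\ pro-) analytic after restriction to a small open subgroup; this is where one must invoke the continuity of the action on $\D_\eta^I(W)$ (resp.\ $\D_\eta(W)$), together with the fact that the Robba-type rings $E\otimes_F\B_{K,\eta}^I$ and $E\otimes_F\B_{\mathrm{rig},K,\eta}^\dagger$ sit inside the locally analytic, resp.\ pro-analytic, vectors of the larger rings of periods.
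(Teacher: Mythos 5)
The overall strategy — choose a basis of $\D_\eta^I(W)$ (resp.\ $\D_\eta(W)$), observe the same elements form a basis of $\tilde{D}_K^I(W)$ (resp.\ $\tilde{D}_K(W)$), and apply Proposition \ref{lainla and painpa} — is exactly the paper's strategy. The problem is the justification for the hypothesis of that proposition, namely that $g \mapsto \Mat(g)$ is a globally (resp.\ pro-) analytic function on some open subgroup of $\Gamma_K$.

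You assert this via ``a standard argument\ldots using that a continuous $\GL_d$-valued cocycle on a $p$-adic Lie group is analytic on a sufficiently small open subgroup.'' This is not a standard fact, is not contained in the discussion preceding Proposition \ref{lainla and painpa}, and is false in the generality stated: for an arbitrary $\Qp$-Banach $G$-ring $B$, a continuous $\GL_d(B)$-valued cocycle need not become analytic on any open subgroup (this is already delicate for cocycles valued in $\C_p^\times$, which is why the theory of locally analytic vectors in Sen theory is nontrivial). You also conflate two different statements near the end: the entries of each fixed $\Mat(g_0)$ lying in $\B_{K,\eta}^I \subset (E\otimes_F\Bt_K^I)^{\la}$ says that for each $g_0$ those entries are locally analytic \emph{vectors}, which is not the same as saying the \emph{function} $g \mapsto \Mat(g)$ is locally analytic. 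Continuity of the $\Gamma_K'$-action plus the fact that the coefficient ring consists of (pro-)analytic vectors does not by itself yield the required analyticity of the cocycle.

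What is actually needed — and what the paper invokes by citing \cite[\S 2.1]{KR09} — is the Frobenius structure on $\D_\eta^I(W)$. The argument there uses the commutation of $\phi_q$ with $\Gamma_K'$, together with the contracting behavior of Frobenius between annuli, to show that $\Mat(\gamma) - 1$ tends to $0$ at a rate that forces local analyticity of the action on the basis. Without this $\phi$-input the key step is unjustified, so the proof has a genuine gap at precisely the point you flagged as ``the expected main obstacle.''
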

\begin{proof}
The same proof as \cite[\S 2.1]{KR09} shows that the elements of $\D_\eta^I(W)$ are locally analytic vectors, and the result now follows from proposition \ref{lainla and painpa}.
\end{proof}

\begin{theo}
\label{FanBpair gives FanphiGamma}
If $W$ is an $F$-analytic $(B,E)$-pair of rank $d$, then there exists a unique $F$-analytic $(\phi_q,\Gamma_K)$-module $D(W)$ over $E \otimes_F \B_{\mathrm{rig},K}^\dagger$ such that 
$$(E \otimes_F \Bt_{\mathrm{rig}}^\dagger) \otimes_{E \otimes_F \B_{\mathrm{rig},K}^\dagger}D(W) = \tilde{D}(W).$$
\end{theo}
\begin{proof}
Let $W$ be an $F$-analytic $(B,E)$-pair of rank $d$, and let $\tilde{D}_K(W)$ be as above. Let $r \geq 0$ and let $y \in (\tilde{D}_K(W)^r)^{\pa}$. Let $\tau \in \Sigma \setminus \{\id\}$ and let 
$$\Omega_{\tau,r} = \left\{g \in \Omega \textrm{ such that } n(g) \geq n(r) \textrm{ and } \tau(g)=\tau \right\}.$$
Let $g \in \Omega_{\tau,r}$. We have $\iota_g(y) \in W_{dR,\tau}^+$. Write $x_g$ for the image of $\iota_g(y)$ in $W_{dR,\tau}^+/tW_{dR,\tau}^+$. Since the filtration on $W_{dR,\tau}$ is Galois stable, we get that $x_g$ is invariant under $H_K$ (since $\iota_g(y)$ is), and is a locally analytic vector of $(W_{dR,\tau}^+/tW_{dR,\tau}^+)^{H_K}$ using the fact that $y \in (\tilde{D}_K(W)^r)^{\pa}$. Note that $\nabla_\id = 0$ on $((W_{dR,\tau}^+/tW_{dR,\tau}^+)^{H_K})^{\la}$ since $W$ is $F$-analytic and by \cite[Prop. 2.10]{Ber14MultiLa}. This shows that $\nabla_\id(x_g) = 0$ and so $\nabla_\id(\iota_g(y))=0 \mod t_\pi$ (recall that $t$ and $t_\pi$ both generate the kernel of $\theta$ in $\Bdrplus$ by lemma \ref{prod t_tau}). Using the fact that $\iota_g \circ \nabla_\tau = \nabla_\id \circ \iota_g$, this implies that $t_\pi | \iota_g\circ \nabla_\tau(y)$ in $W_{dR,\tau}^+$. By lemma \ref{prod t_tau}, this proves that $\iota_g((Q_n^\tau)^{-1}\cdot \nabla_\tau(y)) \in W_{dR,\tau}^+$ for $n=n(g)$. By definition of $\tilde{D}^r(W)$, this proves that $\nabla_\tau(y) \in Q_n^\tau \cdot \tilde{D}^r(W)$ for all $n \geq n(r)$, and so $\nabla_\tau$ is divisible by $\prod\limits_{n=n(r)}^{+\infty}Q_n^\tau$ in $\tilde{D}^r(W)$ (the argument for the divisibility by an infinite product is the same as the one given in the proof of \cite[Lemm. 10.2]{Ber14MultiLa}), hence by $t_\tau$.

In particular, for all $\tau \in \Sigma \setminus \{\id\}$, we have $\nabla_\tau(\tilde{D}^r(W)^{\pa}) \subset t_\tau \cdot \tilde{D}^r(W)^{\pa}$. By proposition \ref{prop eq phigamma fan}, there exists a unique $(\phi_q,\Gamma_K)$-module $\D_{\mathrm{rig}}^\dagger$ over $E \otimes_F \B_{\mathrm{rig},K}^\dagger$ such that $(E \otimes_F \Bt_{\mathrm{rig}}^\dagger) \otimes_{E \otimes_F \B_{\mathrm{rig},K}^\dagger}\D_{\mathrm{rig}}^\dagger = \tilde{D}(W)$, which is what we wanted.
\end{proof}

We now explain how to attach an $F$-analytic $(B,E)$-pair to an $F$-analytic $(\phi_q,\Gamma_K)$-module.

\begin{prop}
If $\D$ is a $\phi_q$-module over $\B_{\mathrm{rig},K}^\dagger$, then there exists $r(\D) \geq r(K)$ such that, for all $r \geq r(\D)$, there exists a unique sub $\B_{\mathrm{rig},K}^{\dagger,r}$-module $\D_r$ of $\D$ such that:
\begin{enumerate}
\item $\D = \B_{\mathrm{rig},K}^\dagger \otimes_{\B_{\mathrm{rig},K}^{\dagger,r}}\D_r$;
\item the $\B_{\mathrm{rig},K}^{\dagger,qr}$-module $\B_{\mathrm{rig},K}^{\dagger,qr}\otimes_{\B_{\mathrm{rig},K}^{\dagger},r}\D_r$ has a basis contained inside $\phi_q(\D)$.
Moreover, if $\D$ is a $(\phi_q,\Gamma_K)$-module, one has $g(\D_r) = \D_r$ for all $g \in \Gamma_K$.
\end{enumerate}
\end{prop}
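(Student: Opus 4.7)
\emph{Existence.} The plan is to extract $\D_r$ from any $\B_{\mathrm{rig},K}^\dagger$-basis $(e_1,\dots,e_d)$ of $\D$. Let $P\in\GL_d(\B_{\mathrm{rig},K}^\dagger)$ be the matrix of $\phi_q$ in this basis; it is invertible because the linearisation $1\otimes\phi_q:\B_{\mathrm{rig},K}^\dagger\otimes_{\phi_q}\D\to\D$ is an isomorphism. Since $\B_{\mathrm{rig},K}^\dagger=\bigcup_s\B_{\mathrm{rig},K}^{\dagger,s}$ is an ascending union, there is some $r_0\geq r(K)$ with $P,P^{-1}\in M_d(\B_{\mathrm{rig},K}^{\dagger,r_0})$, so $P\in\GL_d(\B_{\mathrm{rig},K}^{\dagger,r_0})$. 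I set $r(\D):=r_0$ and, for every $r\geq r(\D)$, define $\D_r:=\bigoplus_i\B_{\mathrm{rig},K}^{\dagger,r}\cdot e_i\subset\D$. Property (1) is immediate, and for (2) the elements $\phi_q(e_j)=\sum_i p_{ij}e_i$ have coordinates in $\B_{\mathrm{rig},K}^{\dagger,r_0}\subset\B_{\mathrm{rig},K}^{\dagger,qr}$; they thus lie in $\B_{\mathrm{rig},K}^{\dagger,qr}\otimes_{\B_{\mathrm{rig},K}^{\dagger,r}}\D_r$, form a basis of it because $P\in\GL_d(\B_{\mathrm{rig},K}^{\dagger,qr})$, and manifestly belong to $\phi_q(\D)$.

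\emph{Uniqueness.} Suppose $\D_r$ and $\D_r'$ both satisfy (1), (2). A preliminary step is to upgrade (2) into honest control on Frobenius: by expanding the basis $(\phi_q(g_l))_l$ of $\B_{\mathrm{rig},K}^{\dagger,qr}\otimes\D_r'$ provided by (2) in a basis of $\D_r'$ and using the invertibility of the resulting coordinate matrix over $\B_{\mathrm{rig},K}^{\dagger,qr}$, one shows that in a suitable basis $(e_i')$ of $\D_r'$ the matrix $A'$ of $\phi_q$ belongs to $\GL_d(\B_{\mathrm{rig},K}^{\dagger,qr})$, and similarly $A\in\GL_d(\B_{\mathrm{rig},K}^{\dagger,qr})$ in a basis $(e_i)$ of $\D_r$ (the existence construction already gives this). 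Let $T\in\GL_d(\B_{\mathrm{rig},K}^\dagger)$ be defined by $e_j=\sum_i T_{ij}e_i'$; the compatibility of $\phi_q$ with both bases yields the cocycle $TA=A'\phi_q(T)$, equivalently $\phi_q(T)=(A')^{-1}TA$. Pick $s_0\geq qr$ with $T,T^{-1}\in M_d(\B_{\mathrm{rig},K}^{\dagger,s_0})$; then the right hand side lies in $M_d(\B_{\mathrm{rig},K}^{\dagger,s_0})$, and using that $\phi_q:\B_{\mathrm{rig},K}^{\dagger,s/q}\hookrightarrow\B_{\mathrm{rig},K}^{\dagger,s}$ is an injection with identifiable image we deduce $T\in M_d(\B_{\mathrm{rig},K}^{\dagger,s_0/q})$. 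Iterating contracts the exponent by a factor of $q$ at each step, until it drops to $qr$, at which point $T\in M_d(\B_{\mathrm{rig},K}^{\dagger,r})$; the symmetric argument for $T^{-1}$ gives $T\in\GL_d(\B_{\mathrm{rig},K}^{\dagger,r})$, forcing $\D_r=\D_r'$.

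\emph{Galois stability and main obstacle.} For $g\in\Gamma_K$, the submodule $g(\D_r)\subset\D$ is an $\B_{\mathrm{rig},K}^{\dagger,r}$-submodule (since $g$ preserves $\B_{\mathrm{rig},K}^{\dagger,r}$) inheriting (1) by equivariance; applying $g$ to a basis $(f_j)$ of $\B_{\mathrm{rig},K}^{\dagger,qr}\otimes\D_r$ contained in $\phi_q(\D)$ yields a basis $(g(f_j))$ of $\B_{\mathrm{rig},K}^{\dagger,qr}\otimes g(\D_r)$ lying in $g(\phi_q(\D))=\phi_q(\D)$ because $g$ commutes with $\phi_q$, so (2) also holds for $g(\D_r)$ and the uniqueness just proved gives $g(\D_r)=\D_r$. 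The main obstacle I foresee is the preliminary step of the uniqueness argument: one must leverage condition (2) — a mere statement about the existence of one basis in $\phi_q(\D)$ — into pinning down the Frobenius matrix of $\D_r'$ in a ring as small as $\B_{\mathrm{rig},K}^{\dagger,qr}$, so that the subsequent Frobenius descent actually converges to the target ring $\B_{\mathrm{rig},K}^{\dagger,r}$ rather than stalling higher up the tower.
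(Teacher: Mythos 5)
Your outline (existence by spreading out a basis, uniqueness by iterating the cocycle relation $\phi_q(T) = (A')^{-1} T A$ and contracting the radius by a factor of $q$ at each step, Galois stability deduced from uniqueness) is exactly the strategy of the proof the paper cites, namely \cite[Thm. I.3.3]{BerBpaires}. The existence step and the Frobenius-descent iteration are correct; the flagged ``preliminary step'' is, as you suspect, the genuine crux, and as written it does not go through with condition (2) stated as a basis lying in $\phi_q(\D)$. Concretely: if $(g_l)$ is a $\B_{\mathrm{rig},K}^{\dagger,qr}$-basis of $\B_{\mathrm{rig},K}^{\dagger,qr}\otimes\D_r'$ with $g_l = \phi_q(f_l)$ and $f_l\in\D$, then writing $C$ for the matrix of the $f_l$ in a basis of $\D_r'$ and $B$ for that of the $g_l$, one gets $B = A'\phi_q(C)$; but $C$ only has entries in $\B_{\mathrm{rig},K}^{\dagger}$, so $A' = B\,\phi_q(C)^{-1}$ cannot be placed in $\GL_d(\B_{\mathrm{rig},K}^{\dagger,qr})$, and no basis change of $\D_r'$ (which is restricted to $\GL_d(\B_{\mathrm{rig},K}^{\dagger,r})$) fixes this.

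The issue is in the transcription of the statement rather than in your argument: in Berger's Théorème I.3.3 the basis in condition (2) is required to lie in $\phi_q(\D_r)$, not merely in $\phi_q(\D)$. With that formulation the $f_l$ lie in $\D_r'$, the basis $(f_l)$ of $\D_r'$ gives $C\in\GL_d(\B_{\mathrm{rig},K}^{\dagger,r})$, hence $\phi_q(C)\in\GL_d(\B_{\mathrm{rig},K}^{\dagger,qr})$ and $A' = B\,\phi_q(C)^{-1}\in\GL_d(\B_{\mathrm{rig},K}^{\dagger,qr})$; your existence construction also satisfies this sharper (2), since the basis $\phi_q(e_j)$ lies in $\phi_q(\D_r)$. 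Once this control on $A$ and $A'$ is secured, your iteration does land in $M_d(\B_{\mathrm{rig},K}^{\dagger,r})$: when the contracted radius $s_n$ first drops below $qr$, one last application of $\phi_q(T)=(A')^{-1}TA$ together with $\phi_q(\B_{\mathrm{rig},K}^{\dagger,r}) = \phi_q(\B_{\mathrm{rig},K}^{\dagger})\cap\B_{\mathrm{rig},K}^{\dagger,qr}$ yields $T\in M_d(\B_{\mathrm{rig},K}^{\dagger,r})$, and the symmetric argument for $T^{-1}$ finishes uniqueness; the Galois-stability argument is then fine.
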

\begin{proof}
This is exactly the same proof as \cite[Thm. I.3.3]{BerBpaires} but using Lubin-Tate $(\phi_q,\Gamma_K)$-modules instead of cyclotomic ones, and tensoring by $E$ over $F$.
\end{proof}

\begin{prop}
\label{prop phigamma to Bpair}
If $\D$ is a $(\phi_q,\Gamma_K)$-module over $E \otimes_F \B_{\mathrm{rig},K}^{\dagger}$, free of rank $d$, then 
\begin{enumerate}
\item $W_e(\D) = (E \otimes_F \Bt_{\mathrm{rig},K}^\dagger[1/t]\otimes_{\B_{\mathrm{rig},K}^{\dagger}}\D)^{\phi_q=1}$ is a free $\B_{e,E}$-module of rank $d$ which is $\G_K$-stable;
\item $W_{dR}^+= \prod_{\tau \in \Sigma}\left((E \otimes_F \Bdrplus) \otimes_{E \otimes_F\B_{\mathrm{rig},K}^{\dagger,r_{n(g)}}}^{\iota_g}\D^{r_{n(g)}}\right)_{g \in \Omega_{r,\tau}}$ does not depend on $n(g) \gg 0$ and is a free $E \otimes_{\Qp}\Bdrplus = (\Bdrplus_{\tau})_{\tau \in \Sigma}$-module of rank $d$ and $\G_K$-stable. 
\item $W(\D) = (W_e(\D),W_{dR}^+(\D))$ is a $(B,E)$-pair.
Moreover, if $\D$ is $F$-analytic, then so is $W(D)$.
\end{enumerate}
\end{prop}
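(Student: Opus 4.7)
The plan is to adapt Berger's cyclotomic construction in \cite[§2.2]{BerBpaires} to the Lubin-Tate setting, using the previous lemma (giving $\B_{e,E} = E \otimes_F (\Bt_{\mathrm{rig}}^\dagger[1/t])^{\phi_q=1}$) together with the localization maps $\iota_g$ from §5, and then invoking lemma \ref{nablaid circ ig} to obtain $F$-analyticity of $W(\D)$ at the end. The three items can be proved more or less independently, but the $(B,E)$-pair property in item 3 will rely on the compatibility between the construction of $W_e$ (via $\phi_q$-invariants) and that of $W_{dR}^+$ (via the $\iota_g$), and this compatibility is what requires the most care.

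For item 1, I would set $M = (E \otimes_F \Bt_{\mathrm{rig},K}^\dagger[1/t]) \otimes_{E \otimes_F \B_{\mathrm{rig},K}^{\dagger}} \D$, a free $(\phi_q,\Gamma_K)$-module of rank $d$ over $E \otimes_F \Bt_{\mathrm{rig},K}^\dagger[1/t]$. Regarding $M$ as a $\phi_q$-module over $E \otimes_F \Bt_{\mathrm{rig}}^\dagger[1/t]$ by forgetting the $H_K$-action, Kedlaya's slope filtration theorem in the Lubin-Tate setting (as used already in \cite[§10]{Ber14MultiLa}) shows that $M^{\phi_q=1}$ is a free module of rank $d$ over $(E \otimes_F \Bt_{\mathrm{rig}}^\dagger[1/t])^{\phi_q=1} = \B_{e,E}$; and $\G_K$-stability is immediate because the $\G_K$-action commutes with $\phi_q$. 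For item 2, I would observe that for $\tau \in \Sigma$ and $g \in \Omega_r$ with $\tau(g)_{|F} = \tau$, the map $\iota_g$ extends by $\B_{\mathrm{rig},K}^{\dagger,r_{n(g)}}$-linearity to a surjection onto a free $\B_{\mathrm{dR},\tau}^+$-module of rank $d$. The independence of the choice of $g$ (for $n(g) \gg 0$) follows because two such $g, g'$ differ by a power of $\sigma$, and $\iota_{g'} = \iota_g \circ \phi_q^k$ up to unit, so after base change the resulting lattice does not change. Taking the product over $\tau \in \Sigma$ and using the isomorphism $E \otimes_{\Qp} \Bdrplus \simeq \prod_\tau \B_{\mathrm{dR},\tau}^+$ from §5 gives the desired $W_{dR}^+$, with $\G_K$-action inherited from $\D$ through the $\G_K$-equivariance of each $\iota_g$.

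For item 3, the $(B,E)$-pair property amounts to the identification $\B_{\mathrm{dR},E} \otimes_{\B_{e,E}} W_e(\D) \simeq W_{dR}^+(\D)[1/t]$. I would prove this by tensoring everything up to $E \otimes_F \Bt_{\mathrm{rig}}^\dagger[1/t]$, where both sides become $M[1/t]$ on the one hand and $\prod_\tau \B_{\mathrm{dR},\tau} \otimes \D$ on the other, and then use the fundamental short exact sequence $0 \to \Qp \to \B_e \oplus \Bdrplus \to \Bdr \to 0$ componentwise (extended to $E \otimes_F \Bt_{\mathrm{rig}}^\dagger[1/t]$ as in \cite[§1.1]{BerBpaires}) together with item 2 of lemma \ref{lemm intermediaire Bpairs to phigamma} to control the $t$-denominators. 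Finally, for the $F$-analyticity of $W(\D)$ when $\D$ is $F$-analytic: fix $\tau \in \Sigma_0$ and pick $g \in \Omega_r$ with $\tau(g)_{|F} = \tau$. Since $\D$ is $F$-analytic, remark \ref{fla=nablatau nul} gives $\nabla_\tau(d) = 0$ for every $d \in \D$; by lemma \ref{nablaid circ ig} applied to $\iota_g$ we get $\nabla_{\id}(\iota_g(d)) = \iota_g(\nabla_\tau(d)) = 0$, hence $\nabla_{\id}$ acts trivially on the image of $\iota_g$ in $W_{dR,\tau}^+/t W_{dR,\tau}^+$, which $\B_{dR,\tau}^+$-generates the whole module. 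By Sen theory (see \cite[Prop. 2.10]{Ber14MultiLa} as used in the proof of theorem \ref{FanBpair gives FanphiGamma}), a $\Cp$-semilinear representation of $\G_K$ on which $\nabla_{\id}$ acts by zero is trivial, and $W(\D)$ is $F$-analytic.

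The main obstacle will be the compatibility step proving $W_{dR}^+[1/t] = \B_{dR,E} \otimes_{\B_{e,E}} W_e$: the construction of $W_e$ via $\phi_q$-invariants and that of $W_{dR}^+$ via all the $\iota_g$ a priori live in different ambient rings, and matching them requires the Lubin-Tate analogue of the fundamental exact sequence together with a careful bookkeeping over the factors indexed by $\tau \in \Sigma$ and $g \in \Omega_r$ with $\tau(g)_{|F} = \tau$. The rest of the argument should largely consist of tensoring the cyclotomic proofs of \cite{BerBpaires} by $E$ over $F$ and replacing $\phi$, $\B_{\mathrm{rig},K}^\dagger$, $t$ by their Lubin-Tate counterparts $\phi_q$, $E \otimes_F \B_{\mathrm{rig},K}^\dagger$, and the products $\prod_\tau t_\tau$ appearing in lemma \ref{prod t_tau}.
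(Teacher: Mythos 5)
Your proof is correct and follows essentially the same route as the paper: items 1--3 are handled by adapting Berger's argument from \cite[Prop.~2.2.6]{BerBpaires} (which the paper simply cites), and $F$-analyticity is obtained by combining remark \ref{fla=nablatau nul} with the intertwining relation $\iota_g \circ \nabla_\tau = \nabla_\id \circ \iota_g$ of lemma \ref{nablaid circ ig} and Sen theory. Your version of the $F$-analyticity step is slightly cleaner than the paper's: you use directly that $\nabla_\tau(d) = 0$ for $d \in \D$, whereas the paper works at the level of $(E \otimes_F \Bt_{\mathrm{rig},K}^{\dagger,r_{n(g)}})^{\pa} \otimes \D^{r_{n(g)}}$ and argues via divisibility by $t_\tau$ (which also needs lemma \ref{partial stable} to control $\nabla_\tau$ on the coefficient ring); both amount to the same thing after applying $\iota_g$ and reducing mod $t_\pi$.
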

\begin{proof}
The proof of items 1 and 2 is the same as \cite[Prop. 2.2.6]{BerBpaires}. Assume now that $\D$ is $F$-analytic, and let us prove that $W(\D)$ is $F$-analytic. Let $\tau \in \Sigma \setminus \{\id\}$. 

By item 2, we have $W_{dR,\tau}^+=(E \otimes_F \Bdrplus) \otimes_{E \otimes_F\B_{\mathrm{rig},K}^{\dagger,r_{n(g)}}}^{\iota_g}\D^{r_{n(g)}}$ for some $g \in \Omega_{r,\tau}$. We can find a basis $e_1,\ldots,e_d$ of $\D^{r_{n(g)}}$ over $E \otimes_F \B_{\mathrm{rig},K}^{\dagger,r_{n(g)}}$ such that the image of the basis $\iota_g(e_1),\ldots,\iota_g(e_d)$ of $W_{dR,\tau}^+$ over $E \otimes_F \Bdrplus$ modulo $t_\pi$ is a basis of the $E\otimes_F\Cp$-representation $W_{dR,\tau}^+/tW_{dR,\tau}^+$.

Since the $e_i$ are pro-analytic vectors of $\D^{r_{n(g)}}$ for the action of $\Gamma_K$, the same argument as in the proof of theorem \ref{FanBpair gives FanphiGamma} shows that their image in $W_{dR,\tau}^+/tW_{dR,\tau}^+$ are invariant under $H_K$ and locally analytic vectors of $(W_{dR,\tau}^+/tW_{dR,\tau}^+)^{H_K}$. Since 
$$\nabla_\tau\left((E \otimes_F\Bt_{\mathrm{rig},K}^{\dagger,r_{n(g)}})^{\pa}\otimes_{E \otimes_F\B_{\mathrm{rig},K}^{\dagger,r_{n(g)}}}\D^{r_{n(g)}}\right) \subset t_\tau \cdot \left((E \otimes_F\Bt_{\mathrm{rig},K}^{\dagger,r_{n(g)}})^{\pa}\otimes_{E \otimes_F\B_{\mathrm{rig},K}^{\dagger,r_{n(g)}}}\D^{r_{n(g)}}\right)$$
by lemma \ref{fla=nablatau nul} and since 
$$W_{dR,\tau}^+ = (E \otimes_F \Bdrplus) \otimes_{E \otimes_F\B_{\mathrm{rig},K}^{\dagger,r_{n(g)}}}^{\iota_g}((E \otimes_F\Bt_{\mathrm{rig},K}^{\dagger,r_{n(g)}})^{\pa}\otimes_{E \otimes_F\B_{\mathrm{rig},K}^{\dagger,r_{n(g)}}}\D^{r_{n(g)}})$$
we get that $\nabla_\id(e_i) = 0$ mod $t_\pi$ for all $i$ since $\iota_g \circ \nabla_\tau = \nabla_\id \circ \iota_g$ and since $\iota_g(t_\tau) = t_\pi$.

This implies that $\nabla_\id = 0$ on $(W_{dR,\tau}^+/tW_{dR,\tau}^+)^{H_K, \la}$ so that $(W_{dR,\tau}^+/tW_{dR,\tau}^+)$ is $\Cp$-admissible as a $E \otimes_F \Cp$ representation of $\G_K$, using the discussion following \cite[Thm. 4.11]{Ber14SenLa}.
\end{proof}

\begin{theo}
The two functors $W \mapsto D(W)$ and $\D \mapsto W(\D)$ are inverse one to another and induce an equivalence of categories between the category of $F$-analytic $(B,E)$-pairs and the category of $F$-analytic $(\phi_q,\Gamma_K)$-modules.
\end{theo}
\begin{proof}
Let $W=(W_e,W_{dR}^+)$ be an $F$-analytic $(B,E)$-pair and let $\D = D(W)$. By definition of $W(\D)$, we have 
$$(E\otimes_F \Bt_{\mathrm{rig}}^\dagger[1/t])\otimes_{\B_{e,E}}W_e(\D) = (E\otimes_F \Bt_{\mathrm{rig}}^\dagger[1/t]) \otimes_{E \otimes_F\B_{\mathrm{rig},K}^{\dagger}} \D$$
and by definition of $D(W)$, we have
$$(E\otimes_F \Bt_{\mathrm{rig}}^\dagger[1/t])\otimes_{\B_{e,E}}W_e = (E\otimes_F \Bt_{\mathrm{rig}}^\dagger[1/t]) \otimes_{E \otimes_F\B_{\mathrm{rig},K}^{\dagger}} \D$$
so that, taking the invariants by $\phi_q$, we get that $W_e \simeq W(\D)$ as $\B_{e,E}$-representations.

Let $\tau \in \Sigma$. By definition of $W_{dR,\tau}^+(\D)$, we have $W_{dR,\tau}^+(\D) = (E \otimes_F\Bdrplus)\otimes^{\iota_g}\D^{r_{n(g)}}$ for some $g \in \Omega_{r,\tau}$ with $r$ big enough, and hence
$$W_{dR,\tau}^+(\D) = (E \otimes_F\Bdrplus)\otimes^{\iota_g}\tilde{D}^{r_{n(g)}}$$ 
where $\tilde{D}^r = \tilde{D}^r(W) = (E\otimes_F \Bt_{\mathrm{rig}}^{\dagger,r})\otimes_{E \otimes_F \B_{\mathrm{rig},K}^{\dagger,r}}\D^r$ by proposition \ref{phigamma eta inside Dtilde}. Recall that 
$$\tilde{D}^r(W)=\left\{y \in (E\otimes_F\Bt_{\mathrm{rig}}^{\dagger,r}[1/t])\otimes_{\B_{e,E}}W_e \textrm{ such that } \iota_g(y) \in W_{dR,\tau(g)_{|F}}^+ \textrm{ for all } g \in \Omega_r \right\},$$
so that, after tensoring by $E\otimes_F\Bdrplus$ over $\iota_g$, we get $W_{dR,\tau}^+(\D(W))=W_{dR,\tau}^+$.

Let $\D$ be an $F$-analytic $(\phi_q,\Gamma_K)$-module and let $W=W(\D)$ and $\tilde{D} = (E \otimes_F\B_{\mathrm{rig}}^\dagger) \otimes_{E\otimes_F\B_{\mathrm{rig},K}^\dagger}\D$. The same reasoning as above shows that
$$(E\otimes_F\Bt_{\mathrm{rig}}^\dagger[1/t])\otimes_{E \otimes_F\B_{\mathrm{rig},K}^{\dagger}}\D = (E\otimes_F\Bt_{\mathrm{rig}}^\dagger[1/t])\otimes_{E \otimes_F\B_{\mathrm{rig},K}^{\dagger}}\D(W(\D))$$
and that
$$(E\otimes_F\Bt_{\mathrm{rig}}^\dagger[1/t])\otimes_{E \otimes_F\B_{\mathrm{rig},K}^{\dagger}}\tilde{D} = (E\otimes_F\Bt_{\mathrm{rig}}^\dagger[1/t])\otimes_{E \otimes_F\B_{\mathrm{rig},K}^{\dagger}}\tilde{D}(W(\D)).$$

If $M$ is a $(\phi_q,\Gamma_K)$-module over $E \otimes_F\B_{\mathrm{rig}}^\dagger$, note that we can recover $M$ inside $M[1/t]$ by
$$M = \left\{x \in M[1/t] \textrm{ such that } \iota_g(x) \in (E\otimes_F\Bdrplus) \otimes_{E\otimes_F \Bt_{\mathrm{rig}}^{\dagger}}^{\iota_g}M \textrm{ for all } g \textrm{ with } n(g) \gg 0 \right\}.$$ 
In particular, since 
$$(E\otimes_F\Bt_{\mathrm{rig}}^\dagger[1/t])\otimes_{E \otimes_F\B_{\mathrm{rig},K}^{\dagger}}\tilde{D} = (E\otimes_F\Bt_{\mathrm{rig}}^\dagger[1/t])\otimes_{E \otimes_F\B_{\mathrm{rig},K}^{\dagger}}\tilde{D}(W(\D)),$$
this shows that
$$\tilde{D} = \tilde{D}(W(\D)).$$
Since $\D$ is $F$-analytic, we have $\nabla_\tau((\tilde{D}_K)^{\pa}) \subset t_\tau \cdot (\tilde{D}_K)^{\pa})$ for all $\tau \in \Sigma \setminus \{\id\}$ by proposition \ref{prop eq phigamma fan}, hence there exists, still by proposition \ref{prop eq phigamma fan}, a unique $F$-analytic $(\phi_q,\Gamma_K)$-module $\D_{\mathrm{rig}}^\dagger$ over $E\otimes_F\B_{\mathrm{rig},K}^\dagger$ such that 
$$\mathrm{Sol}(\tilde{D}_K^{\pa}) = (E \otimes_F (\Bt_{\mathrm{rig},K}^\dagger)^{\fpa}) \otimes_{E \otimes_F \Bt_{\mathrm{rig},K}^\dagger} \D_{\mathrm{rig}}^\dagger$$
 and such that 
$$\tilde{D} = (E \otimes_F \Bt_{\mathrm{rig},K}^\dagger)\otimes_{E\otimes_F\B_{\mathrm{rig},K}^{\dagger}} \D_{\mathrm{rig}}^\dagger$$

In particular, we have $\D=\D(W(\D))=\D_{\mathrm{rig}}^\dagger$, which concludes the proof.
\end{proof}

\section{A simpler equivalence in the $F$-analytic case}
In this section, we quickly recall the constructions of Ding's $\B_\sigma$-pairs and explain the corresponding equivalence of category that follows between $F$-analytic $(\phi_q,\Gamma_K)$-modules and $B_{\id}$-pairs.

Let $\B_{e,F}^{\LT} = (\Btrigplus[1/t_\pi])^{\phi_q=1}$. As usual, it is easy to check that $\B_{e,F}^{\LT} = (\Bt_{\rig}^\dagger[1/t_\pi])^{\phi_q=1}$. Following \cite{Dingpartially}, we make the following definition:

\begin{defi}
\begin{enumerate}
\item Let $\sigma \in \Sigma_E$ be any embedding. A $B_\sigma$-pair is the data of a couple $W_\sigma = (W_{\sigma,E}^{\LT},W_{\dR,\sigma}^+)$ where $W_{\sigma,E}^{\LT}$ is a finite free $E \otimes_F^\sigma \B_{e,F}^{\LT}$-module equipped with a semi-linear $\G_K$ action and $W_{\dR,\sigma}^+$ is a $\G_K$-invariant $\B_{\dR,\sigma}^+$-lattice in $W_{\dR,\sigma} := W_{\sigma,E}^{\LT} \otimes_{E \otimes_F^\sigma \B_{e,F}^{\LT}} \B_{\dR,\sigma}$.
\item For two $B_{\sigma}$-pairs $W_\sigma, W_\sigma'$, a morphism $f: W_\sigma \ra W_\sigma'$ is a $\G_K$-invariant $E \otimes_F^\sigma \B_{e,F}^{\LT}$-linear map $f_{\sigma,E}^{\LT} : W_{\sigma,E}^{\LT} \ra (W_{\sigma,E}')^{\LT}$ such that the induced $\B_{\dR,\sigma}$-linear map $f_{\dR,\sigma}:=f_{\sigma,E}^{\LT} \otimes \id : W_{\dR,\sigma} \ra W_{\dR,\sigma}'$ sends $W_{\dR,\sigma}^+$ to $(W')_{\dR,\sigma}^+$.
\end{enumerate}
\end{defi}

Let $W = (W_e,W_{\dR}^+)$ be a $(B,E)$-pair. Let $W_{\sigma,E}^{\LT} = $
$$\left\{ w \in W_e : \tau(w) \in W_{\dR,\sigma \circ \tau^{-1}}^+ \textrm{ for all } \tau \in \Gal(E/\Qp), \tau_{|F} \neq \id \right\}.$$
By \cite[Lemm. 1.3]{Dingpartially}, this is a $E \otimes_F^\sigma \B_{e,F}^{\LT}$-module.

\begin{prop}
\label{prop construction F_sigma Ding}
For $\sigma \in \Sigma_E$, the functor
$$F_\sigma : \{(B,E)-\textrm{pairs}\} \ra \{B_\sigma-\textrm{pairs}\}$$
given by 
$$W=(W_e,W_{\dR}^+) \mapsto W_\sigma=(W_{\sigma,E}^{\LT},W_{\dR,\sigma}^+)$$
induces an equivalence of categories.
\end{prop}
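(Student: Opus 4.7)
The plan is to exhibit an explicit quasi-inverse $G_\sigma$ to $F_\sigma$ and verify the two natural isomorphisms. The guiding principle is that by lemma \ref{prod t_tau}, up to a unit we have $t = \prod_{\tau \in \Sigma} t_\tau$ in $\Bt_{\mathrm{rig}}^+$, so $\B_{e,E}$ is obtained from $E \otimes_F^\sigma \B_{e,F}^{\LT}$ by inverting exactly the $t_\tau$ with $\tau \in \Sigma$ and $\tau \neq \sigma|_F$, and the decomposition $E \otimes_{\Qp} \Bdrplus \simeq \prod_{\tau \in \Sigma} \B_{\mathrm{dR},\tau}^+$ is $\Gal(E/\Qp)$-equivariant up to a permutation of indices. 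Thus passing from $W$ to $W_\sigma$ morally imposes integrality at all non-$\sigma$ components of the $\Bdrplus$-lattice while keeping the full lattice at the $\sigma$-component, and the inverse uses the Galois action to rebuild the missing components.

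First I would construct $G_\sigma$ as follows. Given a $B_\sigma$-pair $(W^{\LT}_{\sigma,E}, W^+_{\dR,\sigma})$ of rank $d$, set
\[
W_e := \B_{e,E} \otimes_{E \otimes_F^\sigma \B_{e,F}^{\LT}} W^{\LT}_{\sigma,E},
\]
which is a free $\B_{e,E}$-module of rank $d$ with an induced semilinear $\G_K$-action. Set $W^+_{\mathrm{dR},\sigma|_F} := W^+_{\dR,\sigma}$, and for each $\tau \in \Sigma$ with $\tau \neq \sigma|_F$, pick some $g \in \Gal(E/\Qp)$ with $g \circ \tau = \sigma|_F$ (which exists because $E \supset F^{\Gal}$) and define $W^+_{\mathrm{dR},\tau}$ as the transport of $W^+_{\dR,\sigma}$ along the $g$-action on $W_{\dR} \simeq \prod_\tau W_{\mathrm{dR},\tau}$. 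The resulting $W^+_{\dR} := \prod_\tau W^+_{\mathrm{dR},\tau}$ is $\G_K$-stable because the $\Gal(E/\Qp)$- and $\G_K$-actions on $W_{\dR}$ commute.

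To verify $F_\sigma \circ G_\sigma \cong \id$, one checks that the integrality conditions at all $\tau \neq \sigma|_F$ defining the $\sigma$-piece of $G_\sigma(W_\sigma)$ cut out exactly the untensored module $W^{\LT}_{\sigma,E}$ inside $W_e$; this is dual to the localization statement above. For $G_\sigma \circ F_\sigma \cong \id$, one observes that when the starting data is already a $(B,E)$-pair, the Galois-transport reconstruction of each $W^+_{\mathrm{dR},\tau}$ reproduces the original component because $W^+_{\dR} \subset W_{\dR}$ is already packaged $\Gal(E/\Qp)$-equivariantly under the decomposition $\prod_\tau W^+_{\mathrm{dR},\tau}$. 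Full faithfulness is then automatic by the same localization and Galois-reconstruction arguments: any morphism of $(B,E)$-pairs clearly restricts to a morphism of $B_\sigma$-pairs, and conversely a morphism of $B_\sigma$-pairs extends uniquely by base change to a $\B_{e,E}$-linear map between the $W_e$'s, respecting each reconstructed $W^+_{\mathrm{dR},\tau}$ by equivariance.

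The main obstacle will be Step~1: checking that the reconstructed $W^+_{\mathrm{dR},\tau}$ is well-defined (independent of the choice of $g$) and is a genuine $\G_K$-stable $\B^+_{\mathrm{dR},\tau}$-lattice in $W_{\mathrm{dR},\tau}$. This hinges on carefully tracking how the $\Gal(E/\Qp)$-action on $E \otimes_{\Qp} \Bdrplus$ permutes and twists the components $\B^+_{\mathrm{dR},\tau}$, combined with its commutation with the $\G_K$-action. Since this is essentially the content of Ding's categorical equivalence in \cite{Dingpartially}, one expects the detailed verification to follow from the arguments there, the proposition being a clean reformulation of his construction in the $(B,E)$-language used in this note.
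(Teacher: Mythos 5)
The paper does not actually give a proof: it simply cites \cite[Prop.\ 3.7]{Dingpartially}. Your proposal attempts to reconstruct Ding's quasi-inverse $G_\sigma$ from scratch, which is a reasonable thing to try, but the reconstruction step for the de Rham lattices contains a genuine gap.

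The problem is your Step 3, where you define $W^+_{\dR,\tau}$ for $\tau\neq\sigma|_F$ as ``the transport of $W^+_{\dR,\sigma}$ along the $g$-action on $W_{\dR}$.'' First, there is no natural $\Gal(E/\Qp)$-action on $W_{\dR}$ to transport along: $W_e$ is only a $\B_{e,E}$-module with a semilinear $\G_K$-action, and although $\Gal(E/\Qp)$ acts on the ring $\B_{\dR,E}=E\otimes_{\Qp}\Bdr$ through the $E$-factor, this action is not $\B_{e,E}$-linear, so it does not extend to $W_{\dR}=\B_{\dR,E}\otimes_{\B_{e,E}}W_e$ unless one is given extra descent data. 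Second, and more fundamentally, even if such an action existed, the claim you use to close the loop --- that for a $(B,E)$-pair ``$W^+_{\dR}\subset W_{\dR}$ is already packaged $\Gal(E/\Qp)$-equivariantly'' --- is false: the $\B^+_{\dR,\tau}$-lattices $W^+_{\dR,\tau}$ at distinct $\tau\in\Sigma$ are \emph{independent} pieces of data (each only required to be $\G_K$-stable, and $\G_K$ fixes $F$ pointwise, hence does not relate different $\tau$'s). Thus $G_\sigma\circ F_\sigma$ as you define it would only be the identity on the strictly smaller subcategory of $(B,E)$-pairs whose lattices are Galois-compatible.

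The correct reconstruction does not transport $W^+_{\dR,\sigma}$ at all. The key observation is that $W^{\LT}_{\sigma,E}$ already encodes the lattices at every $\tau\neq\sigma|_F$ through its defining integrality conditions: for $g\in\Gal(E/\Qp)$ with $g|_F\neq\id$, one recovers $W^+_{\dR,\sigma\circ g^{-1}|_F}$ as the $\B^+_{\dR,\sigma\circ g^{-1}|_F}$-span inside $W_{\dR,\sigma\circ g^{-1}|_F}$ of the image of $W^{\LT}_{\sigma,E}$ under the map $w\mapsto g(w)$ used in the definition of $W^{\LT}_{\sigma,E}$. With this definition both composites are the identity essentially by inspection; this is the content of \cite[Lemm.\ 3.8]{Dingpartially}. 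Your construction of $W_e$ by base change and your treatment of the $\sigma$-component lattice are fine, and the localization heuristic via $t=u\cdot\prod_\tau t_\tau$ from Lemma \ref{prod t_tau} is a good guide, but the lattice-reconstruction step needs to be replaced as above.
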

\begin{proof}
This is \cite[Prop. 3.7]{Dingpartially}.
\end{proof}

For $\sigma \in \Sigma_E$, let $G_\sigma$ denote the inverse functor of $F_\sigma$ defined by Ding in \cite[Lemm. 3.8]{Dingpartially}. We say that a $B_{\id}$-pair $W$ is $F$-analytic if for all $\sigma \in \Sigma_E$ such that $\sigma_{|F}\neq \id_F$, then $W_{\dR,\sigma}^+/tW_{\dR,\sigma}^+$ is the trivial $\Cp$-representation of $\G_K$, where $W_{\dR,\sigma}^+$ is the second component of the $B_\sigma$-pair $F_\sigma \circ G_{\id}(W)$. By \cite[Lemm. 3.9]{Dingpartially}, this is the same as asking that the corresponding $(B,E)$-pair $G_{\id}(W)$ is $F$-analytic.

\begin{prop}
If $\D$ is a $(\phi_q,\Gamma_K)$-module over $E \otimes_F \B_{\mathrm{rig},K}^{\dagger}$, free of rank $d$, then 
\begin{enumerate}
\item $W_{\id,E}^{\LT}(\D) = (E \otimes_F \Bt_{\mathrm{rig},K}^\dagger[1/t_\pi]\otimes_{\B_{\mathrm{rig},K}^{\dagger}}\D)^{\phi_q=1}$ is a free $E \otimes_F^\sigma \B_{e,F}^{\LT}$-module of rank $d$ which is $\G_K$-stable;
\item $W_{\dR,\id}^+= \left((E \otimes_F \Bdrplus) \otimes_{E \otimes_F\B_{\mathrm{rig},K}^{\dagger,r_{n(g)}}}^{\iota_g}\D^{r_{n(g)}}\right)_{g \in \Omega_{\id,r}}$ does not depend on $n(g) \gg 0$ and is a free $\B_{\dR,\id}^+$-module of rank $d$ which is $\G_K$-stable. 
\item $W(\D)^{\LT} = (W_{\id,E}^{\LT}(\D),W_{\dR,\id}^+(\D))$ is a $B_{\id}$-pair.
Moreover, if $\D$ is $F$-analytic, then so is $W(\D)$.
\end{enumerate}
\end{prop}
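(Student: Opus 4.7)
The strategy is to reduce to the $(B,E)$-pair case already handled in proposition \ref{prop phigamma to Bpair} and then to transfer across Ding's equivalence $F_{\id}$ from proposition \ref{prop construction F_sigma Ding}. Concretely, I first apply proposition \ref{prop phigamma to Bpair} to $\D$ to obtain a $(B,E)$-pair $W(\D) = (W_e(\D), W_{\dR}^+(\D))$, then I form the $B_{\id}$-pair $F_{\id}(W(\D))$, and it remains to identify this $B_{\id}$-pair with the explicit description given in items (1) and (2) and to propagate the $F$-analyticity. For item (2) there is nothing really to do: by construction in proposition \ref{prop phigamma to Bpair}, one has the decomposition $W_{\dR}^+(\D) \simeq \prod_{\tau \in \Sigma} W_{\dR,\tau}^+(\D)$, and its $\tau = \id$ factor is precisely the expression given in the statement. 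Hence its freeness of rank $d$ over $\B_{\dR,\id}^+$, its $\G_K$-stability and its independence on $n(g) \gg 0$ are all inherited from that proposition.

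For item (1) the key point is to check that Ding's intrinsic construction
\[
F_{\id}(W(\D))_e = \left\{ w \in W_e(\D) : \tau(w) \in W_{\dR,\tau^{-1}}^+(\D) \text{ for all } \tau \in \Gal(E/\Qp) \text{ with } \tau_{|F} \neq \id \right\}
\]
coincides with the explicit expression $(E \otimes_F \Bt_{\mathrm{rig},K}^\dagger[1/t_\pi] \otimes_{\B_{\mathrm{rig},K}^{\dagger}} \D)^{\phi_q=1}$. Since $W_e(\D) = (E \otimes_F \Bt_{\mathrm{rig},K}^\dagger[1/t] \otimes_{\B_{\mathrm{rig},K}^{\dagger}} \D)^{\phi_q=1}$, the factorization $t = u \cdot \prod_{\tau \in \Sigma} t_\tau$ of lemma \ref{prod t_tau} together with the valuation statement (the $\tau'$-component of $t_\tau$ has $t$-adic valuation $1$ if $\tau' = \tau^{-1}$ and $0$ otherwise) shows that forcing the poles of a $\phi_q$-invariant element to lie only in $t_\pi = t_{\id}$ is the same as forcing its image to lie in $W_{\dR,\tau^{-1}}^+(\D)$ in every component $\tau \neq \id$ of the product decomposition $E \otimes_F \Bdr \simeq \prod_{\tau} \B_{\dR,\tau}$. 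The two descriptions therefore agree, and freeness of rank $d$ together with $\G_K$-stability follow from proposition \ref{prop construction F_sigma Ding} applied to $W(\D)$.

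Finally, if $\D$ is $F$-analytic then the last sentence of proposition \ref{prop phigamma to Bpair} gives that the $(B,E)$-pair $W(\D)$ is $F$-analytic, and by \cite[Lemm. 3.9]{Dingpartially} (recalled in this section just after proposition \ref{prop construction F_sigma Ding}) the $B_{\id}$-pair $F_{\id}(W(\D)) = W(\D)^{\LT}$ is then also $F$-analytic, which concludes the proof. The main obstacle I anticipate is the bookkeeping in the second paragraph: one must carefully track the $\tau \leftrightarrow \tau^{-1}$ swap coming from lemma \ref{prod t_tau} when matching the pole-control condition that defines $[1/t_\pi]$ with Ding's condition cutting out $W_{\id,E}^{\LT}$ inside $W_e$, and one must check that the two resulting submodules of $W_e(\D)$ really do coincide (and not just that one is contained in the other).
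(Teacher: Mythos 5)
Your argument is correct, but it inverts the logical structure of the paper's own proof. The paper establishes items (1)--(3) by rerunning the direct argument of \cite[Prop.\ 2.2.6]{BerBpaires}, exactly as in proposition \ref{prop phigamma to Bpair}, so the freeness of rank $d$ and $\G_K$-stability of $W_{\id,E}^{\LT}(\D)$ and $W_{\dR,\id}^+$ are checked by hand; the identification $W(\D)^{\LT}=F_{\id}(W(\D))$ is only invoked at the very end to transport $F$-analyticity across Ding's equivalence. You instead make that identification the crux of the entire proof: once the explicit formulas of items (1)--(2) are matched with Ding's intrinsic construction, freeness, $\G_K$-stability and the $B_{\id}$-pair axioms all fall out of proposition \ref{prop construction F_sigma Ding}, with the $F$-analyticity clause handled exactly as the paper does. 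Both routes ultimately need the same identification (which the paper calls easy and does not spell out either), so they are equivalent in strength; your version has the advantage of avoiding a re-run of Berger's lattice argument, at the cost that the item (1) matching --- ``poles only in $t_\pi$'' versus Ding's condition on the $\tau(w)$ for $\tau_{|F}\neq\id$ --- has to be done carefully, which is precisely what you flag as the main obstacle. One small index slip to watch: the target of Ding's condition should be written $W_{\dR,(\tau^{-1})_{|F}}^+$ rather than $W_{\dR,\tau^{-1}}^+$, since the de Rham factors are indexed by $\Sigma$ and not by all of $\Gal(E/\Qp)$; with that noted, the valuation statement in lemma \ref{prod t_tau} does carry the identification through as you describe.
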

\begin{proof}
The proof of items 1, 2 and 3 is the same as in \ref{prop phigamma to Bpair}. The part on $F$-analyticity now follows from the remark above and the fact (which follows easily from proposition \ref{prop construction F_sigma Ding}) that the $B_{\id}$-pair $W(\D)$ we just constructed is exactly $F_{\id}(W')$ where $W'$ is the $(B,E)$-pair attached to $\D$ constructed in proposition \ref{prop phigamma to Bpair}. 
\end{proof}

In particular, this construction is the exact analogue of Berger's construction \cite[Prop. 2.2.6]{BerBpaires} in the cyclotomic case.

We now explain how to recover the $(\phi_q,\Gamma_K)$-module $\D_{\rig}^\dagger$ attached to an $F$-analytic $\B_{\id}$-pair $W$. Let $W_\id=(W_{\id,E}^{\LT},W_{\dR,\id}^+)$ be an $F$-analytic $B_\id$-pair.

\begin{lemm}
Let $\tilde{D}^r(W)^{\LT}= $
$$\left\{y \in (E\otimes_F\Bt_{\mathrm{rig}}^{\dagger,r}[1/t_\pi])\otimes_{E \otimes_F \B_{e,F}^{\LT}}W_{\id,E}^{\LT} \textrm{ such that } \iota_g(y) \in W_{\dR,\id}^+ \textrm{ for all } g \in \Omega_{r} \textrm{ with } \tau(g) = \id \right\}.$$
Then:
\begin{enumerate}
\item $\tilde{D}^r(W)^{\LT}$ is a free $E \otimes_F \Bt_{\rig}^{\dagger,r}$-module of rank $d$;
\item $\tilde{D}^r(W)^{\LT}[1/t_\pi] = E \otimes_F \Bt_{\rig}^{\dagger,r}[1/t_\pi]\otimes_{E \otimes_F \B_{e,F}^{\LT}}W_{\id,E}^{\LT}$.
\end{enumerate}
\end{lemm}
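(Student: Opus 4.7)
The plan is to mirror the proof of Lemma \ref{lemm intermediaire Bpairs to phigamma}, which itself follows \cite[Lemm. 2.2.1]{BerBpaires}. The $B_\id$-pair setting is actually simpler because only the single embedding $\id$ is at play, so the finite-intersection-over-$\Sigma$ step disappears entirely; the period $t$ is replaced by $t_\pi$ everywhere. I would first fix $g_0 = (\id,n(r)) \in \Omega_r$, set $M := (E\otimes_F\Bt_{\mathrm{rig}}^{\dagger,r}[1/t_\pi])\otimes_{E \otimes_F \B_{e,F}^{\LT}}W_{\id,E}^{\LT}$, and introduce the auxiliary module
$$\tilde{D}^r_{g_0}(W)^{\LT} := \{y \in M : \iota_{g_0}(y) \in W_{\dR,\id}^+\}.$$
Lifting a $\B_{\dR,\id}^+$-basis of $W_{\dR,\id}^+$ along $\iota_{g_0}$ shows this is free of rank $d$ over $E \otimes_F \Bt_{\mathrm{rig}}^{\dagger,r}$, exactly as in the cited lemma. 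Since $\tilde{D}^r(W)^{\LT}$ is the intersection, over $g \in \Omega_r$ with $\tau(g) = \id$, of the closed subsets defined by $\iota_g(y) \in W_{\dR,\id}^+$, it is closed in $\tilde{D}^r_{g_0}(W)^{\LT}$, and Lemma \ref{Forster} then ensures it is free of rank at most $d$ over $E \otimes_F \Bt_{\mathrm{rig}}^{\dagger,r}$.

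The main step, which I expect to be the most delicate, is to show that for every $x \in M$ there exists $m \geq 0$ with $t_\pi^m x \in \tilde{D}^r(W)^{\LT}$; this forces the rank to equal $d$ and yields item 2 simultaneously. I would pick a basis $e_1,\ldots,e_d$ of $W_{\id,E}^{\LT}$; since $\B_{e,F}^{\LT} \subset (\Bt_{\mathrm{rig}}^\dagger[1/t_\pi])^{\phi_q=1}$, the elements $1 \otimes e_i$ are $\phi_q$-invariant in $M$. For any $g \in \Omega_r$ with $\tau(g) = \id$ one has $n(g) \equiv 0 \bmod h$, so writing $n(g) = n(r) + kh$ the map $\iota_g$ factors as $\iota_{g_0} \circ \phi_q^{-k}$; consequently $\iota_g(e_i) = \iota_{g_0}(e_i)$ while $\iota_g(t_\pi) = \pi^{-k} \iota_{g_0}(t_\pi)$. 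Since $\iota_{g_0}(t_\pi)$ generates the same ideal as $t$ in $\B_{\mathrm{dR},\id}^+$ (using Lemma \ref{prod t_tau}) and $\pi^{-k}$ is a unit, a single choice of $m_1$ makes $\iota_g(t_\pi^{m_1} e_i) \in W_{\dR,\id}^+$ for all admissible $g$ at once.

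It then remains to invoke the Lubin--Tate analog $E \otimes_F \B_{e,F}^{\LT} = \bigcup_{j \geq 0}(E \otimes_F \Bt_{\mathrm{rig}}^+)^{\phi_q = \pi^j} \cdot t_\pi^{-j}$ of Berger's decomposition of $\B_e$, which produces some $m_2 \geq 0$ with $t_\pi^{m_2} x$ lying in the $E \otimes_F \Bt_{\mathrm{rig}}^{\dagger,r}$-span of the $e_i$. Taking $m := m_1 + m_2$ then yields an element of $\tilde{D}^r(W)^{\LT}$ and delivers both items of the lemma. The delicate point is the uniformity of $m_1$ in $g$, which as indicated rests entirely on the $\phi_q$-invariance of the $e_i$ and the congruence $n(g) \equiv 0 \bmod h$ for admissible $g$.
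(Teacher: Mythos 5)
Your proposal is correct and takes the same route the paper indicates: the paper's proof simply says to repeat the argument of Lemma \ref{lemm intermediaire Bpairs to phigamma}, dropping the intersection over the embeddings $\tau \in \Sigma$ and replacing $t$ by $t_\pi$, which is exactly what you spell out (closedness via Lemma \ref{Forster}, then saturation in $t_\pi$ via the $\phi_q$-invariance of a basis of $W_{\id,E}^{\LT}$ and the decomposition of $\B_{e,F}^{\LT}$). Your explicit observation that $\tau(g)=\id$ forces $n(g)\equiv 0 \bmod h$, hence $\iota_g$ differs from $\iota_{g_0}$ only by a power of $\phi_q$, is precisely the mechanism behind the terse ``$\phi_q(e_i)=e_i$'' step in the paper's proof of the earlier lemma.
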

\begin{proof}
This is the same proof as in lemma \ref{lemm intermediaire Bpairs to phigamma} but here we do not need to keep track of all the embeddings. 
\end{proof}
 
We know that there are enough pro-analytic vectors inside $\tilde{D}(W)^{\LT}$, just because we already know by the constructions of \S 5 that it contains the $F$-analytic $(\phi_q,\Gamma_K)$-module $D(W')$ attached to $W'=G_\id(W)$ of theorem \ref{FanBpair gives FanphiGamma}. We can now recover it by taking the pro-analytic vectors of $\tilde{D}(W)^{\LT}$ and taking the module $\D_{\rig}^\dagger(\tilde{D}(W)^{\LT})$ given by proposition \ref{prop eq phigamma fan}. In particular, the following is a straightforward consequence of our previous constructions:

\begin{theo}
The functors $\D \mapsto W(\D)^{\LT}$ and $W_\id \mapsto \D_{\rig}^\dagger(\tilde{D}(W)^{\LT})$ are inverse of each other an give rise to an equivalence of categories between the category of $F$-analytic $(\phi_q,\Gamma_K)$-modules and the category of $F$-analytic $B_{\id}$-pairs.
\end{theo}

\bibliographystyle{amsalpha}
\bibliography{bibli}
\end{document}